\newcommand{\comment}[1]{}
\newtheorem{theorem}{Theorem}
\newtheorem{lemma}{Lemma}
\newtheorem{rem}{Remark}
\newtheorem{proposition}{Proposition}
\newtheorem{definition}{Definition}
\newtheorem{corollary}{Corollary}
\begin{document}

\title{One-dimensional infinite memory imitation models with noise}
\author{ Emilio De Santis \\
{\small Dipartimento di Matematica }\\
{\small  Universit\`a di Roma La Sapienza}\\
{\small \texttt{desantis@mat.uniroma1.it}} \and Mauro Piccioni \\
{\small Dipartimento di Matematica }\\
{\small  Universit\`a di Roma La Sapienza}\\
{\small \texttt{piccioni@mat.uniroma1.it}}} \maketitle

\begin{abstract}
In this paper we study  stochastic process indexed by $\mathbb {Z}$
constructed from certain transition kernels depending on the whole
past. These kernels prescribe that, at any time, the current state
is selected by looking only at a previous random instant. We
characterize uniqueness in terms of simple concepts concerning
families of stochastic matrices, generalizing the results previously
obtained in De Santis and Piccioni (J. Stat. Phys.,
150(6):1017--1029, 2013).
\end{abstract}

\medskip

\medskip \noindent \textbf{keywords:} Perfect simulation, Coupling, Chains
with complete connections.

AMS classification: 60G99, 68U20, 60J10.

\section{Introduction and main definitions}

\comment{
$$\xymatrix{
\bullet \ar@/_/[rr] & \bullet & \bullet
\ar@/_0.5pc/[rr]  & \bullet \ar@/^1pc/[rrr] & \bullet
 \ar@/_1pc/[rr]  & \bullet & \bullet
\ar@/_1.5pc/[ll]
} $$

$$\xymatrix{
1\ar[r] \ar@/_/[rr] & 2\ar[r] & 3 \ar[r]
\ar@/_0.5pc/[rr]  & 4 \ar[r]  \ar@/^1pc/[rrr] & 5
\ar[r]  \ar@/_1pc/[rr]  & 6 \ar[r] &7 \ar[r]
\ar@/_1.5pc/[rr]  & 8 \ar[r] & 9
} $$
}

This paper is concerned with stochastic processes indexed by
$\mathbb{Z}$ taking values in a finite alphabet $G$, constructed
from a \emph{transition kernel} which depends on the whole past, i.e. a map
$p : G \times G^{-\mathbb{N}_+} \to [0,1]$ such that, for any choice
of $\mathbf {w}_{-\infty}^{-1}=(w_{-1},w_{-2},\ldots) \in
G^{-\mathbb{N}_+}$, $p(\cdot| \mathbf {w}_{-\infty}^{-1} )$ is a
probability measure on $G$. In the literature these models appear under various names, as
\emph{chains with complete connections} \cite{Iosifescu},
\emph{g-functions} \cite{Bramson} or \emph{processes with long memory} \cite{CFF}.

Two alternative ways to associate a $G$-valued stochastic process,
i.e. a probability measure on $G^{\mathbb{Z}}$, to a transition
kernel are possible. The first deals with processes with a  boundary
condition. Let $ \mathbf{w}= (w_n, n \in \mathbb{Z}) $ be an
arbitrary \emph{configuration}, i.e. an element of $G^{\mathbb{Z}}$,
possibly random, and let $r \in \mathbb{Z}$; we say that
${\mathbf{X}}^{r, \mathbf{w}}=(X^{r, \mathbf{w}}_n, n \in
\mathbb{Z})$ is \emph{governed by the kernel} $p$ \emph{with
boundary condition} $\mathbf {w}$ \emph{from the instant} $r$, if
$X_n^{r, \mathbf{w}}= w_n$ for $n \leq r$ and
\begin{equation}\label{bobob}
    P( X_n^{r, \mathbf{w}}= g |X_{n-1}^{r, \mathbf{w}} ,X_{n-2}^{r, \mathbf{w}}, \ldots  )=
    p( g |X_{n-1}^{r, \mathbf{w}} ,X_{n-2}^{r, \mathbf{w}},
    \ldots) \text{ a.s.},
\end{equation}
for any $n>r$. It is clear that, given  the law of $\mathbf{w}$ (in
particular if it is a deterministic sequence), the law of this
process is uniquely defined. If for some strictly decreasing
sequence $ (r_n )$, ${\mathbf{X}}^{r_n, \mathbf{w}}$ converges
weakly in the product topology of $G^{\mathbb{Z}}$, the limit is
said to be an \emph{infinite volume limit}. By weak compactness of
the set of probability measures on a compact space, this set is
non-empty: it reduces to a single element $\mu$ if and only if
${\mathbf{X}}^{r, \mathbf{w}}$ converges weakly to $\mu$, as $r \to
-\infty$, irrespectively of $\mathbf{w}$. Infinite volume limits are
mainly considered in the theory of multi-dimensional random fields
\cite{Georgii,Grimmett}. Here we find convenient to borrow the
standard usage in the multi-dimensional framework to introduce
boundary conditions from an entire configuration on $\mathbb{Z}$,
rather than shifting to the left a configuration defined only on the
half-line $-\mathbb{N}_+$.

The second construction, more often used in the one-dimensional \emph{time-directional} context
we are concerned with, is
to declare directly a process
${\mathbf{X}}=(X_n, n \in \mathbb{Z})$,  equivalently its law,
to be \emph{compatible} with $p$ if
\eqref{bobob} holds for any $n \in \mathbb{Z}$. This is analogous to the Dobrushin-Lanford-Ruelle
definition in the theory of multi-dimensional  random fields \cite{Georgii,Grimmett}.
Compatible laws are immediately seen to be infinite volume
limits; indeed if ${\mathbf{W}}$ is compatible with $p$ and we choose it as
a boundary condition, then ${\mathbf{X}}^{r, \mathbf{W}}$ has
the same law of ${\mathbf{W}}$, for any $r \in \mathbb{Z}$: here we profit
of having allowed random boundary conditions.
Conversely,
since \eqref{bobob} is equivalent to
$$
E [ \mathbf{1}_{\{g\}} ({X}^{r, \mathbf{w}}_n ) h({X}^{r, \mathbf{w}}_{n-1} ,
 \ldots ,  {X}^{r, \mathbf{w}}_{n-m}  ) ] =
 E [  p( g |X_{n-1}^{r, \mathbf{w}} ,X_{n-2}^{r, \mathbf{w}},
    \ldots)  h({X}^{r, \mathbf{w}}_{n-1} ,
 \ldots ,  {X}^{r, \mathbf{w}}_{n-m}  ) ] ,
$$
for any positive integer $m$ and any real function $h$ defined on
$G^m$, this relation is maintained in the limit provided
$p(g|\cdot)$ is continuous for any $g \in G$. In this paper only
continuous kernels will be considered, therefore we will identify
infinite volume limits with compatible laws, denoting their set with
$\mathcal{G}(p)$. In the proofs both characterizations of
$\mathcal{G}(p)$ will be found useful.

Notice that elements of $\mathcal{G}(p)$ are not necessarily
stationary, i.e. translation invariant, but from a non-stationary
element of $\mathcal{G}(p)$ one can produce a stationary one by
performing Cesaro averages of shifts over a finite window increasing
to $\mathbb{Z}$. Thus if $\mathcal{G}(p)$ reduces to a single
element, it has to be stationary. On the contrary, it is possible
that  $|\mathcal{G}(p)|>1$ but this set contains only one stationary
element; indeed we will present later a situation in which this
happens. Notice that, $\mathcal{G}(p)$ being convex, in case of non
uniqueness $\mathcal{G}(p)$ has infinitely many elements.

Uniqueness conditions for general transition kernels of the form
\eqref{bobob} are scattered in the literature for various decades.
Some of these results refer to a dynamical systems setting,  see
e.g. \cite{Keane, Walters, Johansson}. The use of techniques of a
more probabilistic flavor, in particular \emph{coupling techniques},
has increased in time, see e.g. \cite{Lalley, Berbee, Stenflo}. The
work \cite{CFF} has started a constructive approach, focused to the
design of \emph{perfect simulation schemes} for the unique
compatible measure. In a number of cases this has allowed to prove
not only the uniqueness, but also the existence of a compatible law,
when $G$ is countable. Finally, multi-dimensional statistical
mechanics techniques, such as the  Dobrushin criterion, have
recently been used also in this setting \cite{Fernandez04, FM05}.
For perfect simulation in the multi-dimensional case the reader is
addressed to e.g. \cite{DL, DPExact, GLO}; also the continuity assumption can be relaxed,
as in \cite{DM}.

The various sufficient conditions for uniqueness usually take a
suitable positivity condition together with some regularity
assumption on the kernel $p$. The latter allows to control the
behavior of the range of the functions $p(g|\mathbf
{w}^{-1}_{-r}\cdot)$ on $G^{-\mathbb {N}_{+}}$, for fixed $g \in G$
and $\mathbf {w}^{-1}_{-\infty} \in G^{-\mathbb {N}_+}$, as $r$ gets
large. Regularity assumptions of some sort are actually needed for
uniqueness, as shown in \cite{Bramson}, where an example of a
positive transition kernel has been given with
 a strong ``dependence on the remote past" that gives rise to different infinite volume limits.

In order to motivate the class of kernels considered in the paper it
is useful to recall the setting of \cite{CFF}. In this paper they write down
a decomposition of a continuous kernel of the following form:
\begin{equation}\label{cffkern}
p(g |\mathbf{w}_{-\infty}^{-1})=\theta_0 \nu (g)+\sum_{k=1}^{\infty} \theta_k
P_{(k)}(g; w_{-1},....w_{-k}),
\end{equation}
where $\nu$ is a probability distribution on $G$, $\theta =(\theta_n, n \in \mathbb{N})$ is
a probability distribution on the integers and for any $k \in \mathbb{N}_+$
$P_{(k)} : G \times G^k \to [0,1]$ is a transition kernel
depending only on the $k$-th most recent values. If $\theta_0>0$ and
$\theta_n$ decays to zero fast enough they provide a perfect
simulation algorithm  for the unique compatible measure.
The first assumption corresponds to positivity of $p(g|\cdot)$, for some $g \in G$,
whereas the second amounts again to a regularity
assumption on the kernel $p$.

The mixture decomposition presented in \cite{CFF} is not unique.
Other decompositions have been proposed to prove uniqueness
\cite{DPBackward, GG13, SandroG}, leading to relax not only the
regularity but also the positivity assumption in \cite{CFF}.

In the present paper we consider general transition
kernels of the following form
\begin{equation}\label{kern}
p(g |\mathbf{w}_{-\infty}^{-1})=\sum_{k\in \mathcal {A}} \theta_k
P_{(k)}(w_{-k},g),
\end{equation}
for some probability distribution $\theta$ supported by $\mathcal {A}\subset \mathbb {N}_+$ and
$P_{(k)}$ is a stochastic matrix on $G$,  for any $k \in \mathcal {A}$.
Since $\sum_{k\in \mathcal {A}}\theta_k=1$, any kernel of the form
\eqref{kern} is clearly continuous. A transition kernel of the above
form will be called an \emph{imitation kernel}.

 When the
$P_{(k)}$'s have the property that each row contains only a single
positive entry, necessarily equal to $1$ (as happens in particular
for permutation matrices), the updating rule \eqref{bobob} means
$X_n^{r, \mathbf{w}}=f_k(X_{n-k}^{r, \mathbf{w}})$ with probability
$\theta_k$, where $f_k$ is a function on $G$ obtained from
$P_{(k)}$. We refer to these cases as imitation kernels
\emph{without noise}. Otherwise we speak about imitation kernels
\emph{with noise}. Imitation kernels without noise are in some
sense, to be clarified later, the most interesting to consider.

For general kernels of the form \eqref{kern} the value $X_{r+1}^{r,
\mathbf{w}}$ can be drawn in the following way. An integer $K_{r+1}$
is chosen at random according to the distribution $\theta$, and the
value of the boundary condition $w_{r+1-K_{r+1}}$ is read. Then
$X_{r+1}^{r, \mathbf{w}}$ is drawn from the $w_{r+1-K_{r+1}}$-th row
$P_{(K_{r+1})}(w_{r+1-K_{r+1}},\cdot)$ of the matrix
$P_{(K_{r+1})}$. To perform this step, it is convenient to make
reference to a sequence $(f_{(k)}, k \in \mathbb{N}_+)$ of
\emph{coupling functions} $f_{(k)}:G \times [0,1] \to G$, having the
property that, for any $k \in \mathbb{N}_+$, $f_{(k)}(g,U)$ is
distributed as $P_{(k)}(g,\cdot)$ whenever $U$ is a random variable
uniformly distributed in $[0,1]$, for $g \in G$. So, if $U_{r+1}$ is
uniformly distributed in $[0,1]$,
$f_{(K_{r+1})}(w_{r+1-K_{r+1}},U_{r+1})$ yields $X_{r+1}^{r,
\mathbf{w}}$. This updating rule can be iterated to produce the
values of the process ${\mathbf{X}}^{r, \mathbf{w}}$ at all sites
$n>r$ by drawing a random sample $(K_n, n
> r)$ from $\theta$ and an independent random sample $(U_n, n > r)$
from the uniform distribution on $[0,1]$. It is clear that for imitation kernels without noise,
the $U_n$'s are not needed for the
construction of ${\mathbf{X}}^{r, \mathbf{w}}$.

Rather than proceeding \emph{forward} from the boundary sites, one
can proceed \emph{backwards} from any site of interest. In this case to produce the random variable $X_n^{r,
\mathbf{w}}$, with $n> r$, we have to follow the random walk
$\mathcal {T}^{(n)}=(T_k^{(n)}, k \in \mathbb{N})$
\begin{equation}\label{rwalk}
T_{k+1}^{(n)}=T_{k}^{(n)}-K_{T_{k}^{(n)}}, \text{ for } k \in
\mathbb{N},\,\,\,
\end{equation}
with $T_0^{(n)}=n$, whose distribution of decrements is $\theta$.

Let us define
\begin{equation}\label{indiceM}
M_r^{(n)} = \inf \{ k :  T_k^{(n)} \leq r \}, \hbox{ }
V_r^{(n)}=T_{M_r^{(n)}}^{(n)}
\end{equation}
which are the minimum number of steps leading the random walk to
land on a site below the \emph{threshold} $r$ and the landing site,
respectively. The information on the boundary condition $
\mathbf{w}$ is propagated forward by applying recursively the
coupling functions in the following way
\begin{equation}\label{back}
X^{r,\mathbf{w}}_{T_{k-1}^{(n)}}=f_{(T_{k-1}^{(n)}-T_{k}^{(n)})}{(X^{r,
\mathbf{w}}_{T_{k}^{(n)}},U_{T_{k}^{(n)}})}, \,\,\,\,
k=M_r^{(n)},\ldots,1
\end{equation}
starting from $X^{r,\mathbf{w}}_{T_{M_r^{(n)}}}=w_{V_r^{(n)}}$.
Thus, at the end of the recursion one has, for any
$n >r $
\begin{equation}\label{F}
X_{n}^{r,\mathbf{w}}=F_{r,n}(       K_{m},U_{m}, r<m\leq n ;
w_{V_r^{(n)}})
\end{equation}
for some suitably defined  function $F_{r,n}$.

One can appreciate here that, if an additional zero order term
$\theta_0\nu(g)$, with $\theta_0>0$ and $\nu$ probability measure on
$G$, appears in the kernel \eqref{kern}, the $K_n$'s can also assume
the value $0$ with probability $\theta_0$. When this happens, one
stops the random walk from going further in the past and reads
directly the value at that site by sampling from $\nu$. Since this
event will  happen a.s., uniqueness always holds in this case.
Incidentally, $\theta_0 >0$ means $\sum_{k=1}^{\infty}\theta_k<1$,
the Dobrushin sufficient criterion for uniqueness for this kind of
kernels. It is not difficult to realize that a zero order term
\emph{cannot be singled out} when each of the columns of $P_{(k)}$
has a zero entry, for all $k \in \mathcal {A}$. In particular, this
happens for imitation kernels without noise, except in the trivial
case of some $P_{(k)}$ with all the rows equal to the same unit
versor.

Whenever for some pair of distinct sites $m,n \in \mathbb{Z}$, it
happens that $T_h^{(m)}=T_k^{(n)}$, for some positive integers $h$
and $k$, we say that the two random walks started from the sites $m$
and $n$ \emph{coalesce}. If this is the rightmost site in which this
happens, we say that $T_h^{(m)}=T_k^{(n)}$ is the \emph{coalescence
point} of the two random walks. In this case one has
$T_{h+l}^{(m)}=T_{k+l}^{(n)}$, for any $l \in \mathbb{N}$, hence for
$r\leq T_h^{(m)}$, it is $V_r^{(m)}=V_r^{(n)}$. As a consequence the
values $w_{V_r^{(m)}}$ and $w_{V_r^{(n)}}$ coincide, conveying all
the information about the boundary condition $\mathbf {w}$ needed to
compute both $X_{m}^{r, \mathbf{w}}$ and $X_{n}^{r,\mathbf{w}}$, by
means of the functions $F_{r,m}$ and $F_{r,n}$ defined in \eqref{F}.

If the random walks $\mathcal {T}^{(n)}$, started from $n \in
\Lambda$, with $\Lambda$ arbitrary finite subset of $\mathbb{Z}$,
coalesce a.s. we say that the distribution $\theta =(\theta_n, n \in
\mathcal {A})$ is \emph{coalescent}. In order to verify this
property it is enough to check it for a window $\Lambda$ made by two
adjacent sites of $\mathbb{Z}$. If we let two particles perform two
independent random walks started from these two sites, with the rule
that it is always the rightmost that moves, the distance between the
two particles is a Markov chain on the integers, the so called von
Schelling process \cite{Feller}, up to coalescence. This is again a
random walk with decrements following the law $\mathbf {\theta}$,
but with a reflection around the origin once the negative half-line
is hit. Thus coalescence of $\theta$ means that from any $n \in
\mathbb{N}_+$ the return of this process to the origin is almost
sure; this requires both some algebraic property for $\mathcal{A}$
and a control of the tail behavior of the $\theta_n$'s, see \cite{PW11}.

Back to imitation kernels, we mention that already in \cite{CFF} a
particular class of binary kernels of the form
\eqref{kern} was examined, in which $P_{(k)}$ took only the two possible
values
\begin{equation}\label{dimenticato}
I_2=\left(%
\begin{array}{cc}
  1 & 0 \\
  0 & 1 \\
\end{array}%
\right), J_2=\left(%
\begin{array}{cc}
  0 & 1 \\
  1 & 0 \\
\end{array}%
\right),
\end{equation}
for $k\in \mathcal {A}$, the so-called \emph{binary autoregressive kernels}.
However the presence of a zero order term $\theta_0 \nu (g)$ made the uniqueness problem trivial.
In our previous work \cite{DPautor}
we have considered binary autoregressive kernels with $\theta_0=0$,
equivalently with the $P_{(k)}$'s equal to either $I_2$ or $J_2$,
making a first step towards understanding the implications of the lack of positivity
for imitation kernels. The main result of that paper is that for a coalescent $\theta$ uniqueness holds.

In the present paper the results are completely general, and not restricted to the binary case.
The main result is that uniqueness for imitation kernels
can be characterized completely in terms of the properties of
what we call the \emph{$G$-stochastic function} induced by
the imitation kernel \eqref{kern}, namely the mapping
$$k \in \mathcal {A}\subset \mathbb {N}_+ \mapsto P_{(k)}.$$
The necessary and sufficient conditions generalize the well known
concepts of \emph{irreducibility} and \emph{aperiodicity} for a
single stochastic matrix.

Irreducibility is discussed in Section \ref{sec2}. Since the
presence of two irreducible classes implies the existence of two
different compatible laws (Proposition \ref{addnonuniq}) and states
not belonging to an irreducible class cannot appear in the support
of a compatible law (Proposition \ref{essentirred}), we are allowed
to focus our further study to irreducible kernels.

Aperiodicity is the subject of Section \ref{sec3}. Here a difference
with the case of a single stochastic matrix appears: the states are
constrained to have a period which is a multiple of the gcd
$d(\mathcal{A})$ of $\mathcal {A}$. But since any element of
$\mathcal {G}(p)$ has independent marginals along the residual
classes mod $d(\mathcal{A})$, the uniqueness problem is reduced to
any of them, for which  with an obvious rescaling
$d(\mathcal{A})=1$. This is the content of Proposition
\ref{sottografo}, which allows to correct a mistake occurred in
\cite{DPautor}. Furthermore, as it happens for finite Markov chains,
the presence of several periodic classes implies the existence of
different \emph{non-stationary} elements of $\mathcal {G}(p)$,
obtained one from the other by shifts (Theorem \ref{t2}). Notice
that this kind of \emph{phase transition} is entirely different from
the one in \cite{Bramson} that  concerns a positive kernel. At the
end of the section we prove an important lemma relating the
stationary elements of $\mathcal {G}(p)$ with the invariant
distributions of the stochastic matrix $\hat P=\sum_{k \in \mathcal
{A}} \theta_k P_{(k)}$.

In Section \ref{princris} we prove our main result (Theorem
\ref{t1}), which is analogous to the ergodicity theorem for finite
Markov chains: uniqueness holds for irreducible and aperiodic
imitation kernels. The unique invariant distribution $\hat
{\lambda}$ of $\hat P$ is identified as the single-site marginal of
the unique compatible law.

The fact that the tail behavior of the $\theta_k$'s does not enter
in the uniqueness conditions entails that, by keeping $\mathcal{A}$
fixed, but distributing enough mass to larger values of $k \in
\mathcal{A}$, we can construct examples of uniqueness in which any
of the general sufficient conditions appeared in the literature
fails. The uniqueness theorem appearing in \cite{DPautor}  is found
as a particular case, without assuming coalescence (Theorem
\ref{vecchionuovo}).

In Section \ref{perfect} we propose two simulation algorithms of the
CFTP type \cite{PW}, to construct the unique compatible law on any
finite window of $\mathbb {Z}$. The first, presented in Theorem
\ref{epsilone}, works when the distribution $\mathbf {\theta}$ is
known to be coalescent. When $\theta $ is not coalescent or at least
this is unknown, a threshold has to be specified, introducing an
error in the algorithm. In Theorem \ref{end} we prove that the error
introduced in this way can be made arbitrarily small pushing the
threshold towards $-\infty$. For this reason we call it an
$\varepsilon$-perfect simulation algorithm. In a situation of non
uniqueness, the algorithms presented here can still be used to
construct any stationary element of $\mathcal {G}(p)$. In the
irreducible but non-aperiodic case, it can also be proved that there
is only one stationary element.

Finally, in Section \ref{basta} we present a result for the case of
countable $G$. A sufficient condition for the existence of a unique
element in $\mathcal{G}(p)$, together with a perfect simulation
algorithm is obtained. For finite $G$, such a condition reduces to
irreducibility and aperiodicity. The  algorithm eliminates the
approximation error for non-coalescent $\theta$ but it can be
considerably more complicate for $G$ large or infinite.

\medskip

\section{Irreducibility of $G$-stochastic functions and uniqueness}
\label{sec2}

We start with a brief discussion of mappings defined on some $\mathcal {A} \subset \mathbb{N}_+$
with values in the set of stochastic matrices over the set $G$. We
call a mapping of this type a $G$-\emph{stochastic function}.

Recall that the free semigroup generated by $\cal {A}$ is the set
$\cal {A}^*=\bigcup_{n \in \mathbb{N}^+} \cal {A}^n$ of finite
$n$-tuples with elements in $\mathcal {A}$, for all positive
integers $n$, called \emph{words} in the sequel. It is indeed a semigroup
under concatenation, defined for $\mathbf{a}=(a_1,\ldots,a_n)$ and
$\mathbf{b}=(b_1,\ldots,b_m)$ by
$\mathbf{a}\mathbf{b}=(a_1,\ldots,a_n,b_1,\ldots,b_m)$. A
$G$-stochastic function defined on $\mathcal {A}$ extends to a
homomorphism of the semigroup $\cal {A}^*$ into the semigroup of
stochastic matrices on $G$ by associating to each $\mathbf
{a}=(a_1,\ldots,a_n)$ the stochastic matrix

\begin{equation}\label{conca}
P_\mathbf {a}=P_{(a_n)}\cdots P_{(a_1)}.
\end{equation}

Likewise, for any $\mathbf{a}=(a_1,\ldots,a_n)\in \cal {A}^*$, we
define the composition of coupling functions $f_{\mathbf{a}}: G
\times [0,1]^n\to G$ as
$$
f_{\mathbf{a}}(g;u_1,\ldots,u_n)=f_{(a_1)}(f_{(a_2)}(\ldots
f_{(a_{n-1})}(f_{(a_n)}(g;u_n),u_{n-1}),\ldots,u_{2}),u_1),
$$
for $g \in G$ and $u_i\in [0,1], i=1,\ldots,n$. We define the
\emph{depth} of $\mathbf{a}$ as $s (\mathbf{a})= \sum_{ i=1 }^n a_i
$.

Let us define a directed graph  $\Gamma_{\theta}$ with the sites of
$\mathbb{Z}$ as vertices, and  arcs joining  $n \in \mathbb{Z}$ with
$n-k$, whenever $k \in \cal{A}$. It is natural to visualize the
elements $\mathbf {a} \in \mathcal {A}^*$ as  paths of the graph
$\Gamma_{\theta}$. Once we have weighted  the  arc $(n,n-k )$ with
the probability $\theta_k>0$, we can assign a probability to any
path, given by the product of the probabilities of the arcs
belonging to the path. Now, for $\mathbf {a}=(a_1,\ldots,a_m) \in
\mathcal {A}^*$, $P_{\mathbf {a}}$ is the stochastic matrix used to
compute the value of $X_n^{r, \mathbf{w}}$ from the value
$X_{n-s(\mathbf {a})}^{r, \mathbf{w}}$, for $n-s(\mathbf {a})>r $,
whenever $K_n=a_1, K_{n-a_1}=a_2,\ldots,
K_{n-a_1-\ldots-a_{m-1}}=a_m$: an event which has probability
$\theta_{\mathbf {a}}=\theta_{a_1}\cdot \ldots \cdot \theta_{a_m}$.
Notice that infinitely many paths are associated to each $\mathbf
{a} \in \mathcal {A}^*$, differing in the starting site in
$\mathbb{Z}$. Also observe that the \emph{depth} $s(\mathbf {a})$ is
the distance of the last site of the path from the first one (see
Fig. 1).

The sample $\mathbf {K}=(K_n, n \in \mathbb {Z})$ selects a particular random subgraph
$\Gamma^{\mathbf{K}}$ of $\Gamma_{\theta}$, made of the arcs
$(m,m-K_m)$, for all $m \in \mathbb{Z}$. Likewise, the random walks $\mathcal {T}^{(n)}=
(T^{(n)}_k, k \in \mathbb {N})$, for $n \in \mathbb {Z}$, can be
actually seen as random walks on the graph $\Gamma_{\theta}$.

\medskip

$$\xymatrix{
\bullet & \bullet & \bullet & \bullet \ar@/_1.5pc/[lll]
& \bullet & \bullet & \bullet
& \bullet \ar@/_1.5pc/[llll] & \bullet & \bullet
\ar@/_1.5pc/[ll]
} $$

\begin{center}\label{fig1}
\small{Fig. 1. The word $\mathbf {a}=(2,4,3)$ and the corresponding path.}
\end{center}

\medskip

We can extend to a $G$-stochastic function $P_{(\cdot)}$ a number of
concepts which are well known for the ``standard" case of a single
stochastic matrix $P$, i.e. the $G$-stochastic function defined on
$\mathcal {A}=\{1\}$, with $P_{(1)}=P$.

\begin{definition}\label{irreducape}
Let $P_{(\cdot)}$ be a $G$-stochastic function defined on $\mathcal
{A}$ and $i \neq j \in G$. We say that $i \in G$
$P_{(\cdot)}$-communicates with $j\in G $ if there exists
$\mathbf{a} \in\mathcal {A}^*$ such that $P_{\mathbf{a} }(i,j)>0$.
We say that $i$ and $j$ $P_{(\cdot)}$-intercommunicate when $i$
communicates with $j$ and vice versa.
\end{definition}
As usual for the standard case, we declare that each $i \in G$
intercommunicates with itself, so that intercommunication becomes an
equivalence relation and $G$ is partitioned in intercommunicating
classes.

\begin{definition}\label{intercomun}An intercommunicating class $C$ is closed when $i \in C$
communicates with $j\in G$ implies $j \in C$ (therefore $j$
communicates with $i$). We say that a $G$-stochastic function
$P_{(\cdot)}$ is irreducible if for any $i$ and $j \in G$ there
exists $\mathbf {a} \in \mathcal {A}^*$ such that $P_a(i,j)>0$, that
is the whole $G$ is the only intercommunicating class.
More generally, $P_{(\cdot)}$ is essentially irreducible
if there exists a single intercommunicating class.
\end{definition}

It is easily verified that $P_{(\cdot)}$ is irreducible if and only
if $\hat P$ is irreducible.
As in the standard case it is possible to
decompose $G$ in a rather familiar way.

\begin{proposition}\label{decompose}
Let $P_{(\cdot)}$ be a $G$-stochastic function. There exists a
unique partition $\{R_1, \ldots , R_s, T\} $ of $G$, with $s \geq 1$, where
\begin{itemize}
   \item [1)] $R_h$ is a closed
   intercommunicating class, thus $\{P_{(k)}\vert_{R_h}, k \in \mathcal {A}\}$
   is irreducible, for $h=1,\ldots,s$;
   \item [2)] for any $i \in T$ there exists
   $j \in R_1 \cup\ldots \cup R_s$ and $\mathbf{a}
\in\mathcal {A}^*$ such that $P_{\mathbf{a} }(i,j)>0$.
\end{itemize}

\end{proposition}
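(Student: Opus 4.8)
The plan is to carry over to $G$-stochastic functions the classical decomposition of a finite Markov chain into its closed (recurrent) classes and a transient remainder, using the $P_{(\cdot)}$-communication relation of Definition~\ref{irreducape} in place of the usual ``leads to''.

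First I would verify that $P_{(\cdot)}$-communication is transitive: if $P_{\mathbf{a}}(i,j)>0$ and $P_{\mathbf{b}}(j,\ell)>0$, then by \eqref{conca} $P_{\mathbf{a}\mathbf{b}}(i,\ell)=(P_{\mathbf{b}}P_{\mathbf{a}})(i,\ell)\geq P_{\mathbf{a}}(i,j)\,P_{\mathbf{b}}(j,\ell)>0$, the concatenation $\mathbf{a}\mathbf{b}$ again lying in $\mathcal{A}^*$. Combined with the convention that each $i$ intercommunicates with itself, this makes intercommunication an equivalence relation; moreover it lets me put on the finitely many intercommunicating classes the relation $C\preceq C'$ meaning ``some element of $C$ communicates with some element of $C'$''. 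Transitivity of communication shows $\preceq$ is independent of the chosen representatives and transitive, while antisymmetry is immediate (if $C\preceq C'$ and $C'\preceq C$ then representatives intercommunicate, so $C=C'$); hence $\preceq$ is a partial order on a finite set. The key structural observation is that a class $C$ is closed in the sense of Definition~\ref{intercomun} if and only if it is $\preceq$-maximal. Since $G$ is finite and nonempty, $\preceq$-maximal elements exist, so there is at least one closed class; I would then \emph{define} $R_1,\dots,R_s$ ($s\geq1$) to be the closed intercommunicating classes and set $T=G\setminus(R_1\cup\cdots\cup R_s)$.

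Next I would check that this partition has the required properties. For property~1), the point is that applying closedness of $R_h$ to the one-letter words $(k)$, $k\in\mathcal{A}$, gives $P_{(k)}(i,j)=0$ for $i\in R_h$ and $j\notin R_h$, so every $P_{(k)}|_{R_h}$ is a stochastic matrix on $R_h$; iterating this along a positive-probability path shows that for $i\in R_h$ and any $\mathbf{a}\in\mathcal{A}^*$ one has $P_{\mathbf{a}}(i,j)=(P_{\mathbf{a}}|_{R_h})(i,j)$ if $j\in R_h$ and $=0$ otherwise. As $R_h$ is a single intercommunicating class, for all $i,j\in R_h$ there is $\mathbf{a}$ with $P_{\mathbf{a}}(i,j)>0$ (for $i=j$ one composes a word from $i$ to some $i'\in R_h$ with one from $i'$ back to $i$, or uses $P_{(k)}(i,i)=1$ when $R_h=\{i\}$), and by the previous remark the same $\mathbf{a}$ witnesses $(P_{\mathbf{a}}|_{R_h})(i,j)>0$, i.e. $\{P_{(k)}|_{R_h},k\in\mathcal{A}\}$ is irreducible. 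For property~2), if $i\in T$ then $[i]$ is not $\preceq$-maximal, so walking up the finite poset gives $[i]\preceq R_h$ for some maximal (hence closed) class $R_h$; by definition of $\preceq$ this means there are $j\in R_h$ and $\mathbf{a}\in\mathcal{A}^*$ with $P_{\mathbf{a}}(i,j)>0$.

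Finally, for uniqueness, I would let $\{R'_1,\dots,R'_{s'},T'\}$ be any partition satisfying 1) and 2). Each $R'_\ell$ is a closed intercommunicating class, hence one of the canonical equivalence classes, and being closed it is $\preceq$-maximal, so $R'_\ell\in\{R_1,\dots,R_s\}$. Conversely fix $R_h$: since two intercommunicating classes are equal or disjoint, either $R_h=R'_\ell$ for some $\ell$, or $R_h\subseteq T'$; in the second case property~2) applied to any $i\in R_h\subseteq T'$ gives $j\in R'_1\cup\cdots\cup R'_{s'}$ with $P_{\mathbf{a}}(i,j)>0$, whereas closedness of $R_h$ forces $j\in R_h$, contradicting $R_h\cap(R'_1\cup\cdots\cup R'_{s'})=\emptyset$. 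Thus $\{R_1,\dots,R_s\}=\{R'_1,\dots,R'_{s'}\}$ and $T=T'$. The whole argument is a routine transcription of the finite Markov chain case; I do not expect a genuine obstacle, only two points requiring a little care, namely keeping track of the order of the factors in $P_{\mathbf{a}}=P_{(a_n)}\cdots P_{(a_1)}$ when invoking transitivity of communication, and the one-letter closedness argument that guarantees both that each $P_{(k)}|_{R_h}$ is genuinely stochastic and that communication within $R_h$ survives passage to the restriction.
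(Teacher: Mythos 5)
Your proposal is correct and takes essentially the same route as the paper, which dispatches this proposition as ``completely analogous'' to the recurrent/transient decomposition of a single finite stochastic matrix; you have merely written out the details (transitivity of communication, the partial order on intercommunicating classes, closed $=$ maximal, stochasticity of the restrictions, uniqueness), and they all check out. The one slip is in the transitivity step: with the convention \eqref{conca} the correct witness is the word $\mathbf{b}\mathbf{a}$, since $P_{\mathbf{b}\mathbf{a}}=P_{\mathbf{a}}P_{\mathbf{b}}$ and hence $P_{\mathbf{b}\mathbf{a}}(i,\ell)\geq P_{\mathbf{a}}(i,j)P_{\mathbf{b}}(j,\ell)$, whereas your bound $(P_{\mathbf{b}}P_{\mathbf{a}})(i,\ell)\geq P_{\mathbf{a}}(i,j)P_{\mathbf{b}}(j,\ell)$ does not hold in general (compare the paper's own use of $P_{\mathbf{a}_2\mathbf{a}_1}(i,i)\geq P_{\mathbf{a}_1}(i,j)P_{\mathbf{a}_2}(j,i)$ in the proof of Proposition \ref{period}); this is a trivially fixable transcription error that does not affect the argument.
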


\begin{proof} The proof is completely analogous to that for a single
finite stochastic matrix. In that case the sets $R_h$, $h=1,\ldots,s$ represent
the recurrent states, whereas the set $T$, which is the
union of the intercommunicating classes that are not closed,
represents the remaining transient states.
\end{proof}

The reader will notice that the above decomposition coincides with that concerning any
convex combination of the stochastic matrices $P_{(k)}, k \in \mathcal {A}$ with
positive weights, in particular
the matrix $\hat {P}$.
The following result allows to rule out a trivial case of non
uniqueness for kernels of the form \eqref{kern}.

\begin{proposition}\label{addnonuniq}
If the $G$-stochastic function $P_{(\cdot)}$ has more than one
closed intercommunicating class then for the
kernel \eqref{kern} one has $|\mathcal {G} (p)|>1$.
\end{proposition}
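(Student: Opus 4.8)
The plan is to exhibit two distinct compatible laws, one "supported above each closed class". Let $R_1$ and $R_2$ be two distinct closed intercommunicating classes (which exist by hypothesis), and fix $h\in\{1,2\}$. The first step is to observe that, because $R_h$ is closed, the restricted $G$-stochastic function $P_{(\cdot)}\vert_{R_h}$ is a genuine $G$-stochastic function on the alphabet $R_h$: indeed closedness means that for $i\in R_h$ the row $P_{(k)}(i,\cdot)$ puts all its mass on $R_h$, so each $P_{(k)}\vert_{R_h}$ is again a stochastic matrix on $R_h$. Hence the imitation kernel $p_h$ obtained from $(P_{(k)}\vert_{R_h})$ and the same $\theta$ is a bona fide imitation kernel on the alphabet $R_h$, and by weak compactness its set of compatible laws $\mathcal{G}(p_h)$ on $R_h^{\mathbb{Z}}$ is non-empty. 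Pick any $\mu_h\in\mathcal{G}(p_h)$.

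The second step is to check that $\mu_h$, viewed as a probability measure on $G^{\mathbb{Z}}$ concentrated on $R_h^{\mathbb{Z}}$, is compatible with the full kernel $p$. This is just a matter of comparing \eqref{bobob} for $p$ and for $p_h$: on a configuration $\mathbf{w}$ with all coordinates in $R_h$, the kernel $p(g\mid\mathbf{w}_{-\infty}^{-1})=\sum_{k\in\mathcal{A}}\theta_k P_{(k)}(w_{-k},g)$ vanishes for $g\notin R_h$ (again by closedness) and equals $\sum_{k\in\mathcal{A}}\theta_k P_{(k)}\vert_{R_h}(w_{-k},g)=p_h(g\mid\mathbf{w}_{-\infty}^{-1})$ for $g\in R_h$; since $\mu_h(R_h^{\mathbb{Z}})=1$, this identity holds $\mu_h$-a.s., so $\mu_h\in\mathcal{G}(p)$. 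The third, and essentially trivial, step is to note $\mu_1\neq\mu_2$: they are mutually singular because $\mu_h$ is carried by the cylinder event $\{X_0\in R_h\}$ and $R_1\cap R_2=\emptyset$. Therefore $|\mathcal{G}(p)|\geq 2>1$.

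I do not expect a serious obstacle here; the whole argument is the standard observation that a closed communicating class yields an invariant sub-chain, transplanted to the imitation-kernel setting. The only point requiring a line of care is the verification that closedness of $R_h$ as an intercommunicating class of $P_{(\cdot)}$ forces each individual matrix $P_{(k)}$, $k\in\mathcal{A}$, to leave $R_h$ invariant — a priori closedness is phrased in terms of the composed matrices $P_{\mathbf{a}}$. But if some $P_{(k)}$ had $P_{(k)}(i,j)>0$ with $i\in R_h$, $j\notin R_h$, then the one-letter word $\mathbf{a}=(k)$ would witness that $i$ communicates with $j\notin R_h$, contradicting closedness; so this is immediate from Definition \ref{intercomun}. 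Once that is in hand, Steps 1--3 go through verbatim, and the existence half of Proposition \ref{decompose} (which guarantees $s\geq 1$, and here $s\geq 2$) is all that is invoked from earlier in the paper.
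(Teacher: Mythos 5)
Your proof is correct and follows essentially the same route as the paper's: restrict the kernel to each closed class, note that each restriction has a non-empty set of compatible laws (by weak compactness and continuity), embed these into $\mathcal{G}(p)$, and conclude by disjointness of supports. You merely spell out the details (in particular the one-letter-word check that each $P_{(k)}$ leaves $R_h$ invariant) that the paper's terse proof leaves implicit.
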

\begin{proof} The restriction of a kernel \eqref{kern} to a closed intercommunicating class $R_h$
is by itself a transition kernel $p_{R_h}$ on $R_h$. Identifying
$\mathcal {G}(p_{R_h})$ with a subset of $\mathcal {G}(p)$ in the
natural way, and taking into account that $\mathcal {G}(p_{R_h})$
are non empty and disjoint, for $h=1,\ldots,s$, the statement of the
theorem is immediately obtained.
\end{proof}

The simplest example of this sort is contained in \cite{DPautor}: if
$P_{(k)}=I_2$ for all $k \in \mathcal {A}$, then the two states are
two closed classes and the Dirac measures on the two constant
sequences are two elements of $\mathcal {G} (p)$.

The following proposition ensures that, when there is only one communicating class,
we can restrict the kernel to it.

\begin{proposition}\label{essentirred}
For a transition kernel of the form \eqref{kern}, consider the
corresponding $G$-stochastic function $P_{(\cdot)}$ and suppose that
$R$ is the union of all closed intercommunicating classes. Let $p_R$ be
the restriction of $p$ to $R$. Then $\mathcal {G}(p)=\mathcal
{G}(p_R)$ for the kernel \eqref{kern}.
\end{proposition}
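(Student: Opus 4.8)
The plan is to show the two inclusions $\mathcal{G}(p_R)\subset\mathcal{G}(p)$ and $\mathcal{G}(p)\subset\mathcal{G}(p_R)$ separately. The first is already essentially granted: a law on $R^{\mathbb{Z}}$ compatible with $p_R$ is, viewed as a law on $G^{\mathbb{Z}}$ concentrated on configurations taking values in $R$, compatible with $p$, because for $g,w_{-1},w_{-2},\ldots$ all in $R$ the kernel $p(g\mid\mathbf{w}_{-\infty}^{-1})$ only ever consults entries $P_{(k)}(w_{-k},g)$ with $w_{-k}\in R$, and since $R$ is a union of closed intercommunicating classes $P_{(k)}(w_{-k},g)=0$ whenever $g\notin R$; hence $p(\cdot\mid\mathbf{w}_{-\infty}^{-1})$ restricted to arguments in $R$ agrees with $p_R$. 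This is exactly the identification used in the proof of Proposition~\ref{addnonuniq}, so I would simply invoke it.

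The substance is the reverse inclusion: every $\mu\in\mathcal{G}(p)$ is concentrated on $R^{\mathbb{Z}}$ (and therefore, by compatibility, belongs to $\mathcal{G}(p_R)$). Here I would use the infinite-volume-limit characterization of $\mathcal{G}(p)$ together with the backward/random-walk construction of ${\mathbf{X}}^{r,\mathbf{w}}$ from equations~\eqref{rwalk}--\eqref{F}. Fix a site $n$ and a threshold $r<n$. The value $X_n^{r,\mathbf{w}}$ is obtained by reading the boundary value $w_{V_r^{(n)}}$ and applying the composition of coupling functions $f_{(\mathbf{a})}$ along the word $\mathbf{a}=(K_n,K_{n-a_1},\ldots)$ of length $M_r^{(n)}$. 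Equivalently, conditionally on the walk skeleton, the law of $X_n^{r,\mathbf{w}}$ is $P_{\mathbf{a}}(w_{V_r^{(n)}},\cdot)$. Now by Proposition~\ref{decompose}, from \emph{every} state $i\in G$ there is a word $\mathbf{b}$ with $P_{\mathbf{b}}(i,R)=\sum_{j\in R}P_{\mathbf{b}}(i,j)>0$; a uniform lower bound on such a probability over the finitely many $i\in T$ and a uniform bound on the depth $s(\mathbf{b})$ of the witnessing words can be chosen. The key observation is that once the random walk $\mathcal{T}^{(n)}$ has, along its trajectory, realized one such ``escape word'' into $R$, the state of the process at that site lies in $R$ with positive probability, and thereafter stays in $R$ forever (closedness). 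Since the walk takes $M_r^{(n)}\to\infty$ steps as $r\to-\infty$, it has infinitely many independent chances to realize an escape word; by a Borel--Cantelli/geometric-trials argument this happens a.s. Hence $\liminf$ over decreasing thresholds forces $X_n^{r,\mathbf{w}}\in R$ with probability tending to $1$, so any weak limit $\mu$ assigns probability $1$ to $\{X_n\in R\}$; intersecting over $n\in\mathbb{Z}$ gives $\mu(R^{\mathbb{Z}})=1$.

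A cleaner alternative, which I would actually prefer to write, is to bypass the backward construction and argue directly on any compatible $\mu$. If $\mu\in\mathcal{G}(p)$, then the single-site marginals $\mu_n$ satisfy $\mu_n=\mu_{n-1}\hat P$-type relations — more precisely, averaging \eqref{bobob} over the past gives that the marginal of $X_n$ is a convex combination (with weights $\theta_k$) of the marginals of $X_{n-k}$ pushed through $P_{(k)}$. Feeding this back $N$ times expresses $\mu_n$ as a mixture over words $\mathbf{a}\in\mathcal{A}^*$ of $P_{\mathbf{a}}$ applied to earlier marginals, with the mixture over words of length exactly $N$ (weighted by $\theta_{\mathbf{a}}$) carrying total mass one. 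Since for $N$ large enough every word of that length, read by any starting state, has had the opportunity to contain an escape subword into $R$, the mass on $T$ decays; letting $N\to\infty$ shows $\mu_n(T)=0$. The main obstacle in either route is making the ``infinitely many independent chances to escape into $R$'' argument rigorous: one must be careful that the escape words can be inserted along the walk/word irrespective of past history (using closedness of $R$ so that no word can leave $R$, and the fact that from any transient state some bounded-depth word reaches $R$ with probability at least some $\delta>0$), and that the relevant randomness — either the walk decrements $K$ and uniforms $U$, or the combinatorics of length-$N$ words with their $\theta$-weights — genuinely refreshes so that a geometric number of trials suffices. Once that is in place, closedness of $R$ makes the conclusion immediate, and compatibility of $\mu$ with $p$ restricted to $R^{\mathbb{Z}}$ is exactly compatibility with $p_R$, giving $\mu\in\mathcal{G}(p_R)$.
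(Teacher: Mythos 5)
Your strategy is the paper's: the easy inclusion $\mathcal{G}(p_R)\subset\mathcal{G}(p)$ via the identification already used for Proposition~\ref{addnonuniq}, and the substantive inclusion by showing $\lim_{r\to-\infty}P(X_n^{r,\mathbf{w}}\in R^c)=0$ through the backward walk, an eventual ``escape'' into $R$, and closedness of $R$. The one point where you diverge --- and which you correctly single out as the main obstacle --- is the state-dependence of the escape word: your $\mathbf{b}_i$ depends on the (unknown, boundary-dependent) state $i$ sitting at the bottom of each block, so the ``infinitely many independent chances'' claim needs a conditional geometric-trials argument, conditioning each block on the decrements and uniforms strictly below it (which determine $i$) and using that the conditional success probability is at least some $\delta>0$. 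That does work. The paper, however, dissolves the obstacle with a small combinatorial device you do not have: it builds a \emph{single} word $\mathbf{a}=\mathbf{a}_{L-1}\cdots\mathbf{a}_0$ together with a positive-Lebesgue-measure set $\Delta$ of uniforms such that $f_{\mathbf{a}}(i,u)\in R$ for \emph{every} $i\in G$ and every $u\in\Delta$. This is achieved by a recursion on sets $\mathcal{Z}_h\subset R^c$ of strictly decreasing cardinality: at each stage one picks a state $j\in\mathcal{Z}_h$ and a word sending $j$ into $R$ on a set of uniforms of positive measure, then shrinks that set so the word acts as a single-valued map on all of $\mathcal{Z}_h$; after at most $|R^c|$ stages everything has been pushed into $R$, and closedness keeps it there. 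The event ``a segment of the $(K,U)$'s lies in $\mathbf{a}\times\Delta$ above the threshold'' is then measurable with respect to the $(K,U)$'s alone, independent of the state and of $\mathbf{w}$, so its almost sure eventual occurrence is immediate and no conditioning is needed. Your second, ``cleaner'' route via iterating the marginal identity is in fact less clean than you suggest: the same state-dependence reappears inside the weighted sum over words of length $N$, and you would end up redoing the same bookkeeping without the probabilistic conditioning to lean on. In short, your argument is correct modulo the conditioning step you flag, and the paper's $\mathcal{Z}_h$-recursion is precisely the missing device that renders that step unnecessary.
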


\begin{proof}
The main step is to prove that if $\mathbf{X} \sim \mu \in
\mathcal{G}(p)$, then, for any $n \in \mathbb {Z}$, $P(X_n\in
{R}^c)=0$. For this it is enough to prove that
\begin{equation}\label{ultima}
\lim_{r \to -\infty}P(X_n^{r, \mathbf{w}}\in {R}^c)=0
\end{equation}
for any $\mathbf{w} \in G^{\mathbb {Z}}$.

Starting with $\mathcal {Z}_{0}=R^c$, we define recursively a
sequence of subsets $\mathcal {Z}_h \subset {R}^c$, for
$h=1,\ldots$, with strictly decreasing cardinality, until for some
integer $L$ it is $\mathcal {Z}_L= \emptyset$. During this
construction we will define $\mathbf {a}_h \in \mathcal {A}^*$ of
length $n_h$ and Borel sets $\Delta_{h} \subset [0,1]^{n_h}$ of
positive Lebesgue measure $\hbox{Leb}_{n_h}$, for
$h=0,1,\ldots,L-1$. The recursive construction is given by
$$
\mathcal {Z}_{h+1}=\{f_{\mathbf {a}_h}(i,\Delta_h), i \in \mathcal {Z}_{h}\}\cap  {R}^c
$$
and has the properties
\begin{enumerate}
    \item $\{f_{\mathbf {a}_h}(i,\Delta_h), i \in \mathcal {Z}_{h}\}\cap
    {R}\neq \emptyset$, $h=0,\ldots,L-1$
    \item For any $i\in \mathcal {Z}_h$, $|f_{\mathbf
    {a}_h}(i,\Delta_h)|=1$.

\end{enumerate}

Let us explain the generic step $h$ of the construction. Fix an
arbitrary state $j \in \mathcal {Z}_{h}$. By the definition of ${R}$
there exists a word $\mathbf {a}_h \in \mathcal {A}^*$ of length
$n_h$ such that $P_{\mathbf {a}_h}(j,{R})>0$. This ensures the
existence of $\Delta_h^*$ with a positive $n_h$-dimensional Lebesgue
measure such that $f_{\mathbf {a}_h}(j,\Delta_h^*)\in {R}$, hence
Property 1 is guaranteed. To obtain Property 2 one may need to
reduce $\Delta_h^*$ to some smaller $\Delta_h \subset \Delta_h^*$
keeping positive Lebesgue measure, which is clearly always possible
by finiteness of $\mathcal {Z}_{h}$.

Now we are in a position to prove \eqref{ultima}. Define the word
$\mathbf {a}$ of length $m=n_0+n_1+\ldots+n_{L-1}$ by the
concatenation $\mathbf {a}=\mathbf {a}_{L-1} \ldots \mathbf {a}_{0}$
and the Borel set $\Delta=\Delta_{L-1} \times \cdots \times
\Delta_{0} \subset [0,1]^m$. Let $c=\theta_{\mathbf {a}}\cdot
\hbox{Leb}_{m}(\Delta)>0$.

Recall the recursive construction of $X_n^{r, \mathbf{w}}$ in terms
of the random walk $\mathcal T^{(n)}=\{T_k^{(n)}, k \in \mathbb
{N}\}$ with the corresponding sequences $\{K_k=K_{T_k^{(n)}}, k \in
\mathbb {N}\}$, obtained through the relation \eqref{rwalk}, and let
$\{U_k=U_{T_k^{(n)}}, k \in \mathbb {N}\}$. If, for some integer
$l$, a segment $(K_l,\ldots,K_{l+m},U_l,\ldots,U_{l+m})$ belongs to
$\mathbf {a} \times \Delta$ with $T_{l+m}^{(n)}>r$, then $X_n^{r,
\mathbf{w}} \in {R}$, irrespectively of $\mathbf{w}$. Since a
segment of this kind will eventually occur with probability $1$,
\eqref{ultima} holds.

As a consequence, if $\mu \in \mathcal{G}(p)$ then $\mu({R}^{\mathbb
{Z}})=1$. Moreover, being $\mu $ compatible   and supported by
${R}^{\mathbb {Z}}$, it is actually in $\mathcal{G}(p_{{R}})$.
\end{proof}

\section{Periodicity of $G$-stochastic functions and
uniqueness}\label{sec3}

In the previous section we have justified to restrict our attention
to irreducible $G$-stochastic functions $P_{(\cdot)}$. In this
section we turn our attention to the notion of periodicity of a
state $i \in G$, which is slightly more delicate than in the
standard case. We start by observing that the depth $s$ is a
homomorphism of the free semigroup $\mathcal {A}^*$ into the
additive semigroup of positive integers $\mathbb {N}_+$. As a
consequence $s(\mathcal {A}^*)$ is a sub-semigroup of $\mathbb
{N}_+$. The period of $\mathcal {A}$ is defined as
\begin{equation}\label{periodoa}
    d(\mathcal {A})=gcd\{ s(\mathcal {A}) \}=gcd\{s(\mathcal {A}^*)\}.
\end{equation}
If $1 \in \mathcal{A}$, as it happens in the standard case, then
$s(\mathcal {A}^*)$ coincides with $\mathbb {N}_+$, and $d(\mathcal
{A})=1$.  Recall that, except for a finite number of elements, an additive semigroup of positive integers has
always the form $\{n_0d, (n_0+1)d,\dots\}$, where $d$ is the gcd of
the semigroup and $n_0$ is a suitable positive integer.

Now suppose that the transition kernel $p$ in \eqref{kern} has
$d(\mathcal {A})>1$. Then it is natural to move from $p$ to
\begin{equation}\label{redker}
   \bar {p}(g |\bar {\mathbf{w}}^{-1}_{-\infty}  )=\sum_{l \in \bar
{\mathcal {A}}} {\theta}_{ld(\mathcal {A})} {P}_{({ld(\mathcal
{A})})}(\bar {w}_{-l},g),
\end{equation}
with $\bar {\mathcal {A}}=\{l:ld(\mathcal {A}) \in \mathcal {A}\}$,
so that $ d(\mathcal {\bar A}) =1$. The following proposition allows
us to set  $ d(\mathcal {A}) =1$ in all the uniqueness proofs of the
present paper, without restriction of generality.

\begin{proposition}\label{sottografo}
For a transition kernel $p$ of the form \eqref{kern} let $d(\mathcal
{A})>1$ and define $\bar p$ as in \eqref{redker}. Then uniqueness
holds for $p$ if and only if it holds for $\bar p$.
\end{proposition}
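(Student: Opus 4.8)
The plan is to decouple the process along residue classes modulo $d := d(\mathcal{A})$ and observe that each class, suitably re-indexed, is governed by the kernel $\bar p$. First I would note the key structural fact: in the construction of $X_n^{r,\mathbf{w}}$ via the random walk $\mathcal{T}^{(n)}$ of \eqref{rwalk}, every decrement $K_m$ lies in $\mathcal{A}$, hence is a multiple of $d$. Therefore the entire trajectory $(T_k^{(n)})_k$ stays in the residue class $n \bmod d$, and likewise, for any $\mu \in \mathcal{G}(p)$ and any $\mathbf{w}$, the value $X_n^{r,\mathbf{w}}$ depends only on the variables $(K_m, U_m)$ with $m \equiv n \pmod d$ and on $w_{V_r^{(n)}}$, which again sits in the class $n \bmod d$. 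Consequently, if $\mathbf{X}\sim\mu\in\mathcal{G}(p)$, the $d$ subsequences $\mathbf{X}^{(i)} := (X_{i + dm}, m \in \mathbb{Z})$, for $i = 0, \ldots, d-1$, are mutually independent (they are measurable with respect to independent blocks of the driving randomness), and each $\mathbf{X}^{(i)}$, after the relabeling $X^{(i)}_m \leftrightarrow \bar X_m$ and $w^{(i)}_m \leftrightarrow \bar w_m$, satisfies \eqref{bobob} for the kernel $\bar p$ of \eqref{redker}. This gives a bijection (respecting the decomposition into product/independent pieces) between $\mathcal{G}(p)$ and $\mathcal{G}(\bar p)^{\,d}$, i.e. $d$-tuples of elements of $\mathcal{G}(\bar p)$; in particular $|\mathcal{G}(p)| = |\mathcal{G}(\bar p)|^d$.

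From this the equivalence is immediate in one direction: if $|\mathcal{G}(\bar p)| = 1$ then $|\mathcal{G}(p)| = 1^d = 1$. For the converse, if $|\mathcal{G}(\bar p)| \geq 2$, picking two distinct elements $\bar\mu_1 \neq \bar\mu_2$ and forming the product $\bar\mu_1 \otimes \bar\mu_2 \otimes \cdots$ across the $d$ classes already yields at least two distinct elements of $\mathcal{G}(p)$ (e.g. $\bar\mu_1$ on every class versus $\bar\mu_2$ on the first class and $\bar\mu_1$ elsewhere), so $|\mathcal{G}(p)| \geq 2$. Hence uniqueness for $p$ is equivalent to uniqueness for $\bar p$.

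The routine points I would spell out carefully are: (i) that $\bar{\mathcal{A}}$ is a genuine index set with $d(\bar{\mathcal{A}}) = 1$, which is exactly the definition in \eqref{redker}; (ii) that the relabeling $m \mapsto (m - i)/d$ sends the arcs of $\Gamma_\theta$ within class $i$ to the arcs of $\Gamma_{\bar\theta}$, and that the coupling functions transform correspondingly, so compatibility is preserved in both directions; and (iii) measurability/independence of the $d$ blocks of $(K_m, U_m)$, which is clear since the blocks are disjoint and the driving variables are i.i.d. The main obstacle — really the only subtle point — is making the correspondence between $\mathcal{G}(p)$ and tuples over $\mathcal{G}(\bar p)$ airtight using the \emph{compatible-law} characterization rather than only the infinite-volume-limit one: one must check that an arbitrary $\mu \in \mathcal{G}(p)$ genuinely factorizes over residue classes (not merely that its finite-volume approximants do), which follows because \eqref{bobob} for $\mu$, applied at sites $n \equiv i \pmod d$, only ever conditions on coordinates in the same class, so the conditional structure of $\mu$ splits as a product; conversely any product of compatible laws for $\bar p$, read back through the relabeling, satisfies \eqref{bobob} for $p$ at every site. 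Once this factorization is established, the counting argument above closes the proof.
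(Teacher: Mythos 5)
Your argument for the direction ``uniqueness for $\bar p$ implies uniqueness for $p$'' rests on the claim that \emph{every} $\mu \in \mathcal{G}(p)$ factorizes as a product over the $d$ residue classes, giving a bijection between $\mathcal{G}(p)$ and $\mathcal{G}(\bar p)^{\,d}$ and the identity $|\mathcal{G}(p)| = |\mathcal{G}(\bar p)|^d$. This claim is false, and the justification you give for it --- that \eqref{bobob} at a site $n$ only ever conditions on coordinates in the class of $n$, ``so the conditional structure of $\mu$ splits as a product'' --- is a non sequitur: prescribing single-site conditional distributions that depend only on same-class coordinates does not force independence of the classes. Concretely, $\mathcal{G}(p)$ is convex (the condition \eqref{bobob} is linear in the law), so any \emph{mixture} of products of elements of $\mathcal{G}(\bar p)$ lies in $\mathcal{G}(p)$, and such a mixture is in general not a product. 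For instance, take $G=\{0,1\}$, $\mathcal{A}=\{2\}$, $P_{(2)}=I_2$: the law putting mass $1/2$ on the all-zero configuration and $1/2$ on the all-one configuration is compatible, yet its even and odd subsequences are perfectly correlated. Hence $\mathcal{G}(p)$ strictly contains the set of products whenever $|\mathcal{G}(\bar p)|>1$, and your counting identity fails.

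What survives: your converse direction is correct, since it only uses that a product $\bar\mu_0\otimes\cdots\otimes\bar\mu_{d-1}$ of compatible laws for $\bar p$ is compatible for $p$ (under independence, conditioning on the full past reduces to conditioning on the same-class past), so two distinct elements of $\mathcal{G}(\bar p)$ produce two distinct elements of $\mathcal{G}(p)$. To repair the other direction you must argue at the level of the finite-volume processes, where the factorization is genuine: for a deterministic boundary condition $\mathbf{w}$, the class-marginals of $\mathbf{X}^{r,\mathbf{w}}$ are functions of disjoint blocks of the i.i.d.\ driving variables, hence independent, and each one is a finite-volume process governed by $\bar p$. If $\mathcal{G}(\bar p)=\{\bar\mu\}$, each marginal converges weakly to $\bar\mu$, so $\mathbf{X}^{r,\mathbf{w}}$ converges to the product law for every $\mathbf{w}$, and $\mathcal{G}(p)$, being the set of such infinite-volume limits, is a singleton. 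This is exactly the route the paper takes; the factorization of the unique compatible law is then a \emph{conclusion} of the argument, not a legitimate input to it.
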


\begin{proof}
    Let us consider the process $\mathbf {X}^{r, \mathbf{w}}$
    governed by the transition kernel $p$. Then for any
    $h=0,1,\ldots, d(\mathcal {A})-1$ the $d(\mathcal {A})$-marginal
    process $\bar {\mathbf{X}}_{(h)}$, defined by
    \begin{equation}\label{marginal}
    \bar {X}_{(h)k}= X^{r, \mathbf{w}}_{h+kd(\mathcal{A})},\,\, k \in \mathbb{Z}
    \end{equation}
    is
    governed by the kernel $\bar p$, with boundary conditions
    $\bar {\mathbf{w}}^{(h)}
=( w_{h+kd(\mathcal{A})}, k \in \mathbb {Z})$, with
    ${r}^{(h)}=\max \{k: h+kd(\mathcal{A})\leq r\}$.
    If uniqueness holds
    for $p$, then $\mathbf {X}^{r, \mathbf{w}}$ converges weakly as
    $r \downarrow -\infty$ to the unique element $\mu$ of $\mathcal
    {G}(p)$, for any choice of $\mathbf{w} \in G^{\mathbb{Z}}$. Let
    $\mathbf {Y}$ be a process with distribution $\mu$. Likewise the process
    $\bar {\mathbf{X}}_{(h)}$
    converges weakly to $\mathbf {Y}^{(h)}=(Y_{h+kd(\mathcal{A})}, k \in
    \mathbb{Z})$, as ${r}^{(h)} \downarrow -\infty$
    which proves that uniqueness holds also for $\bar p$.

    For the converse notice that, for any $r$, conditionally to $\mathbf{w} \in
    G^{\mathbb{Z}}$, the $d(\mathcal {A})$-marginal processes $\bar {\mathbf {X}}_{(h)}$,
    defined in \eqref{marginal}, are independent, for $h=0,1,\ldots,d(\mathcal
    {A})-1$. By consequence, if $\bar \mu$ is the unique element in $\mathcal
    {G}(\bar {p})$, each of the $d(\mathcal {A})$-marginal processes converges to it, so the
    whole process $\mathbf {X}^{r, \mathbf{w}}$ has a limit
    distribution with the $d(\mathcal {A})$-marginal processes
 $\bar \mu$ distributed and independent, which ends the proof.

\end{proof}

The previous proposition corrects the erroneous statement contained
in our paper \cite{DPautor} (see Proposition 1 and Theorem 1) that
$d(\mathcal {A})=1$ is necessary for uniqueness.

Next assume that $d(\mathcal {A})=1$ and define the period of $i \in G$ to be
$d_i=gcd(s(\mathcal {A}^*_i))$, where $\mathcal {A}^*_i=\{\mathbf
{a} \in \mathcal {A}^*: P_{\mathbf {a}}(i,i)>0\}$. If $d_i=1$ we say
that the state $i \in G$ is \emph{aperiodic} for $P_{(\cdot)}$, The
following proposition guarantees that, for irreducible
$G$-stochastic functions, we can refer the term to the whole
function, since all states have the same period.

\begin{proposition}\label{period}
Let $P_{(\cdot)}$ be an irreducible $G$-stochastic function defined
on $\mathcal {A}$, with $d(\mathcal {A})=1$. Then $d_i=\hat d$, for
$i \in G$, for some $\hat{d} \in \mathbb{N}_+$. Moreover, there
exists a partition of $G$ in sets $\{G_h, h=0,\ldots ,  \hat d -1\}$ such
that
$$P_{\mathbf {b}}(i,j)>0, i \in G_h \Rightarrow j \in G_{h+s(\mathbf {b})},$$
identifying $G_{h+k\hat d}$ with $G_h$.
\end{proposition}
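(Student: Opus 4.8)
The plan is to mirror the classical proof that in an irreducible finite Markov chain all states share a common period, adapting it to the semigroup $\mathcal{A}^*$ and the depth homomorphism $s$. First I would fix two states $i,j \in G$ and, using irreducibility, pick words $\mathbf{a},\mathbf{b} \in \mathcal{A}^*$ with $P_{\mathbf{a}}(i,j)>0$ and $P_{\mathbf{b}}(j,i)>0$. Then the concatenations $\mathbf{b}\mathbf{a}$ and $\mathbf{a}\mathbf{b}$ are loops at $i$ and at $j$ respectively, with the same depth $s(\mathbf{a})+s(\mathbf{b})$, since $s$ is a homomorphism and depth is symmetric under concatenation (in contrast with the matrix products, where order matters, but here only the depth enters). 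Moreover, for any loop $\mathbf{c}$ at $i$, the word $\mathbf{a}\mathbf{c}\mathbf{b}$ — wait, I need $\mathbf{b}\mathbf{c}\mathbf{a}$ to be a loop at $j$; whichever ordering makes $P_{\cdot}(j,j)>0$ — is a loop at $j$ of depth $s(\mathbf{c}) + s(\mathbf{a}) + s(\mathbf{b})$. Hence $d_j$ divides $s(\mathbf{c}) + (s(\mathbf{a})+s(\mathbf{b}))$ for every loop $\mathbf{c}$ at $i$, and also divides $s(\mathbf{a})+s(\mathbf{b})$ (taking $\mathbf{c}=\mathbf{b}\mathbf{a}$ itself, or just the loop $\mathbf{b}\mathbf{a}$), so $d_j \mid s(\mathbf{c})$ for every loop at $i$, giving $d_j \mid d_i$. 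By symmetry $d_i \mid d_j$, so $d_i = d_j =: \hat d$ for all $i$.

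For the partition, I would fix a reference state $i_0 \in G$ and define, for each $j \in G$,
$$
G_h = \{\, j \in G : \exists\, \mathbf{a} \in \mathcal{A}^*,\ P_{\mathbf{a}}(i_0,j)>0,\ s(\mathbf{a}) \equiv h \pmod{\hat d} \,\}, \quad h = 0,\ldots,\hat d - 1.
$$
The first thing to check is well-definedness: if $P_{\mathbf{a}}(i_0,j)>0$ and $P_{\mathbf{a}'}(i_0,j)>0$, pick (by irreducibility) $\mathbf{b}$ with $P_{\mathbf{b}}(j,i_0)>0$; then $\mathbf{b}\mathbf{a}$ and $\mathbf{b}\mathbf{a}'$ are loops at $i_0$, so $\hat d \mid s(\mathbf{b})+s(\mathbf{a})$ and $\hat d \mid s(\mathbf{b})+s(\mathbf{a}')$, whence $s(\mathbf{a}) \equiv s(\mathbf{a}') \pmod{\hat d}$. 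Thus the class of $j$ is unambiguous, the $G_h$ are pairwise disjoint, and they cover $G$ by irreducibility. The stated implication $P_{\mathbf{b}}(i,j)>0$, $i \in G_h \Rightarrow j \in G_{h+s(\mathbf{b})}$ then follows by concatenating a witness word from $i_0$ to $i$ with $\mathbf{b}$ and using additivity of $s$, together with the well-definedness just proved applied to $j$.

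The one genuine subtlety — the main obstacle — is the asymmetry of matrix multiplication in \eqref{conca}: $P_{\mathbf{a}\mathbf{b}} = P_{\mathbf{b}} P_{\mathbf{a}}$ and $P_{\mathbf{b}\mathbf{a}} = P_{\mathbf{a}} P_{\mathbf{b}}$ are generally different matrices, so one must be careful that the loop constructions actually produce positive diagonal entries at the intended state, not merely a word of the right depth. The resolution is routine but must be spelled out: a positive path from $i$ to $j$ followed (as composition on the left, i.e.\ concatenation on the correct side) by a positive path from $j$ to $i$ yields a positive path from $i$ to $i$, and this is exactly \eqref{conca} read correctly. Since depth is insensitive to the order of concatenation, all the divisibility arguments go through verbatim; the only care needed is bookkeeping of which word sits on which side of the product. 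Everything else is a transcription of the standard finite-Markov-chain argument, with ``number of steps'' replaced by ``depth $s(\mathbf{a})$'' throughout.
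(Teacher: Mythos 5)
Your proposal is correct and follows essentially the same route as the paper's own proof: both establish $d_i=d_j$ by sandwiching loops at $i$ between connecting words $i\to j$ and $j\to i$ and using additivity of the depth $s$, and both define the classes $G_h$ from a fixed reference state via the residue of $s(\mathbf{a})$ mod $\hat d$, checking disjointness by closing paths back to the reference state. The one subtlety you flag (the reversal of order in $P_{\mathbf{a}\mathbf{b}}=P_{\mathbf{b}}P_{\mathbf{a}}$) is handled in the paper exactly as you describe, by bookkeeping the sides of the products while exploiting that $s$ is insensitive to the order of concatenation.
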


\begin{proof} By irreducibility for any pair $i,j \in G$
there exists $\mathbf{a}_1,\mathbf{a}_2 \in \mathcal{A}^*$ such that
$P_{\mathbf{a}_1}(i,j)P_{\mathbf{a}_2}(j,i)>0$. This implies that
$P_{\mathbf{a}_2\mathbf{a}_1}(i,i)\geq
P_{\mathbf{a}_1}(i,j)P_{\mathbf{a}_2}(j,i)>0$, so
$s(\mathbf{a}_2\mathbf{a}_1)=s(\mathbf{a}_1)+s(\mathbf{a}_2)=kd_i$.
Moreover if $P_{\mathbf{b}}(i,i)>0$, then
$P_{\mathbf{a}_1\mathbf{b}\mathbf{a}_2}(j,j)\geq
P_{\mathbf{a}_2}(j,i)P_{\mathbf{b}}(i,i)P_{\mathbf{a}_1}(i,j)>0$,
thus $\mathbf{a}_1\mathcal{A}^*_i\mathbf{a}_2 \subset
\mathcal{A}^*_j$, from which
$$
s(\mathbf{a}_1)+s(\mathcal{A}^*_i)+s(\mathbf{a}_2)=kd_i+s(\mathcal{A}^*_i)\subset
s(\mathcal{A}^*_j).
$$
This implies that $d_j\leq d_i$. Exchanging the
roles between $i$ and $j$ one gets $d_i=d_j$ as promised.

For the second statement let us fix some reference state $k \in G$,
and, for any $h=0,1,\ldots,{ \hat d}-1$ define the subsets of $G$
$$
G_h=\{i \in G: P_{\mathbf {a}}(k,i)>0 \text{ for some } \mathbf {a}
\in \mathcal {A}^* \text { with } s(\mathbf {a})=n\hat
{d}+h, n \in \mathbb{N}\}.
$$
By irreducibility the union of the $G_h$'s is the whole $G$. Now
suppose that $i \in G_{h_1}\cap G_{h_2}$. Then there exist $\mathbf
{a}_1 \in \mathcal {A}^*$ with $s(\mathbf{a}_1)=n_1\hat {d}+h_1$ and
$\mathbf {a}_2 \in \mathcal {A}^*$ with $s(\mathbf {a}_2)=n_2\hat
{d}+h_2$ such that $P_{\mathbf {a}_1}(k,i)P_{\mathbf {a}_2}(k,i)>0$.
We can safely assume that $n_1=n_2=n$, since $s (\mathcal{A}^*_k)$
contains all the multiples  of $\hat d$ large enough. Next let
$\mathbf {b} \in \mathcal {A}^*$ such that $P_{\mathbf {b}}
(i,k)>0$: we will have that
$$
P_{\mathbf {b}\mathbf {a}_1}(k,k) P_{\mathbf {b}\mathbf
{a}_2}(k,k)\geq P_{\mathbf {b}} (i,k)^2P_{\mathbf
{a}_1}(k,i)P_{\mathbf {a}_2}(k,i)>0,
$$
which implies that $s (\mathbf {b}\mathbf {a}_1) $ and  $s(\mathbf
{b}\mathbf {a}_2)$ are multiples of $ \hat d$. Hence
$$
s(\mathbf {b}\mathbf {a}_1)-s(\mathbf {b}\mathbf {a}_2)=s(\mathbf
{a}_1)-s(\mathbf {a}_2)=h_1-h_2
$$
must be a multiple of $\hat {d}$. Since $|h_2-h_1|<\hat d$ this
happens only when $h_1=h_2$. So we have proved that $\{G_h,
h=0,1,\ldots, \hat {d}-1\}$ is a partition.

Next  assume $P_{\mathbf {b}}(i,j)>0, i \in G_h, j \in G_l$. By
assumption there exists $\mathbf {a} \in \mathcal {A}^*$ such that
$P_{\mathbf {a}}(k,i)>0$ with $s(\mathbf {a})=n\hat d+h$. Then
$P_{\mathbf {b}\mathbf {a}}(k,j)\geq P_{\mathbf {a}}(k,i)P_{\mathbf
{b}}(i,j)>0$, so $$ s(\mathbf {b}\mathbf {a})=s(\mathbf
{b})+s(\mathbf {a})=s(\mathbf {b})+n\hat d+h=m\hat d +l
$$
for some integer $m$, from which
$$
s(\mathbf {b})=(m-n)\hat d+(l-h) \Rightarrow l=h+ {s(\mathbf {b})},
\text{ mod } \hat d,
$$
as desired.
\end{proof}

\medskip

We can immediately make profit of the previous proposition to establish
the following result.

\begin{theorem}\label{t2} Let $p$ be a transition kernel of the form \eqref{kern} and let $d(\mathcal {A})=1$.
If $P_{(\cdot)}$ is irreducible but not aperiodic (i.e. $ \hat d
>1$) then $|\mathcal{G}(p)|>1$.
\end{theorem}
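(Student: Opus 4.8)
The plan is to exhibit at least two distinct compatible laws by exploiting the periodic partition $\{G_0,\ldots,G_{\hat d -1\}}$ furnished by Proposition \ref{period}, and the fact that $d(\mathcal A)=1$ guarantees that the random walk of decrements $\theta$ does not itself sit on a sublattice. The key observation is that, although the decrements $K_n$ are drawn from $\theta$ supported on $\mathcal A$ (which can contain steps of various residues mod $\hat d$), the $G$-coordinate of the process ``rotates'' by $s(\mathbf a)$ mod $\hat d$ along every path $\mathbf a$, by Proposition \ref{period}. So, starting from a boundary condition that is frozen on a single periodic class, the value $X_n^{r,\mathbf w}$ is constrained (for $r$ sufficiently negative) to lie in $G_{\,h_0 + (\text{something depending on the realized walk})}$; but in fact the ``something'' is a deterministic function of $n-V_r^{(n)}$, hence of $n$ modulo $\hat d$ once we know the landing residue — and this is exactly where the rigidity comes from. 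Concretely, I expect the statement to be that any $\mu\in\mathcal G(p)$ has the property that $X_n \in G_{\varphi(n)}$ a.s. for some fixed function $\varphi$, and that $\varphi$ can be shifted.

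First I would make this precise. Fix a reference state $k$ and the partition $\{G_h\}$ of Proposition \ref{period}. Claim: for any $\mu\in\mathcal G(p)$ and any $n,m\in\mathbb Z$, if $X_n\in G_h$ with positive $\mu$-probability and $X_m\in G_l$ with positive $\mu$-probability, then (whenever the ``coalescence'' mechanism of the Introduction forces $V_r^{(n)}=V_r^{(m)}$, which happens a.s. for $r\to-\infty$ on the event that the two backward walks coalesce — but here $\theta$ need not be coalescent, so I should instead argue directly from compatibility). The cleaner route: use compatibility \eqref{bobob}. If $X\sim\mu$ is compatible, then $X_n$ is obtained from $X_{n-K_n}$ via $P_{(K_n)}$, and iterating, for any word length, $X_n$ is reached from $X_{n-s(\mathbf a)}$ along a path of depth $s(\mathbf a)$; by Proposition \ref{period} the class index increases by $s(\mathbf a)$ mod $\hat d$. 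Since $d(\mathcal A)=1$, the depths $s(\mathcal A^*)$ generate all sufficiently large integers, so for any residue $\rho$ mod $\hat d$ there is a word of depth $\equiv\rho$. This lets me show: the event $\{X_n\in G_h\ \forall n\equiv n_0,\ X_n\in G_{h+1}\ \forall n\equiv n_0+1,\ldots\}$ is a tail-measurable, shift-related invariant of the process, and partitions $G^{\mathbb Z}$ into $\hat d$ disjoint pieces each carrying positive mass under a suitable compatible law.

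Then the construction of the distinct laws: take any $\mu\in\mathcal G(p)$ (non-empty by the infinite-volume-limit argument in the Introduction, applied to $p_R$ after Proposition \ref{essentirred} so that it is supported on the irreducible part — here $G$ itself). Decompose $\mu$ according to the value of $h$ such that $X_0\in G_h$; since the process respects the periodic structure, each conditional law $\mu(\cdot\mid X_0\in G_h)$ is again compatible, and these are mutually singular (supported on disjoint subsets of $G^{\mathbb Z}$), giving $|\mathcal G(p)|\ge\hat d>1$. Equivalently and more transparently: the shift $\tau$ on $G^{\mathbb Z}$ maps $\mathcal G(p)$ to itself, but since $\tau$ moves a configuration with $X_0\in G_h$ to one with $X_0\in G_{h+1}$ (the shift of a compatible configuration is compatible, and shifting changes the class index by $1$ mod $\hat d$ via Proposition \ref{period} with a one-step word — careful: we need $1\in\mathcal A$ for a literal one-step shift; instead one argues the class-labelling function $\varphi$ must be $\hat d$-periodic in $n$, and $\tau\mu$ has $\varphi$ shifted by $1$, so $\tau\mu\ne\mu$). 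Thus $\mu,\tau\mu,\ldots,\tau^{\hat d-1}\mu$ are $\hat d$ distinct elements of $\mathcal G(p)$.

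The main obstacle, and the place to be careful, is the interplay between the period $\hat d$ of the $G$-stochastic function and the fact that $\mathcal A$ itself need not be contained in a single residue class mod $\hat d$ — so a single backward step $n\mapsto n-K_n$ changes the $G$-class by an amount $-K_n \bmod \hat d$ that is random, yet Proposition \ref{period} asserts the class only depends on the net depth mod $\hat d$, which for a compatible configuration over $\mathbb Z$ means the class of $X_n$ is a deterministic periodic function of $n$ along each fibre of the residual structure. Nailing down that this ``class function'' $\varphi:\mathbb Z\to\{0,\ldots,\hat d-1\}$ is (i) well-defined $\mu$-a.s., (ii) of the form $\varphi(n)=\varphi(0)+n \bmod \hat d$, and (iii) that shifting $\mu$ genuinely changes $\varphi(0)$ — this consistency argument, using Proposition \ref{period} in both directions along paths, is the technical heart. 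Once that is in place, the existence of $\ge\hat d$ pairwise-singular compatible measures is immediate.
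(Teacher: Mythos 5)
Your high-level idea is the paper's: use the periodic partition from Proposition \ref{period} and produce $\hat d$ compatible laws related by shifts. But two of the steps you lean on are genuinely gapped, and one is false as stated. First, the claim that \emph{every} $\mu\in\mathcal G(p)$ carries a deterministic class function $\varphi$ with $X_n\in G_{\varphi(n)}$ a.s.\ cannot hold: a convex combination of the $\hat d$ ``coherent'' laws is again compatible and has no deterministic $\varphi$; worse, in this setting $\mathcal G(p)$ contains a unique \emph{stationary} element, which is shift-invariant, so for that $\mu$ one has $\tau\mu=\mu$ and your route via $\mu,\tau\mu,\dots,\tau^{\hat d-1}\mu$ applied to an arbitrary $\mu$ proves nothing. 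Your fallback — decomposing $\mu$ by conditioning on $\{X_0\in G_h\}$ — also needs the unproved fact that conditioning on this event preserves compatibility, which is not automatic (it is not a remote-past event). Second, your starting point, a boundary condition ``frozen on a single periodic class'', does not constrain the process: since $d(\mathcal A)=1$, the elements $k\in\mathcal A$ occupy several residue classes mod $\hat d$, so from $w_{n-k}\in G_{h_0}$ for all $k$ the one-step update puts $X_n$ in $\bigcup_{k\in\mathcal A}G_{h_0+k}$, which is not a single class; the landing residue $V_r^{(n)}\bmod \hat d$ is random, so nothing deterministic survives the limit.

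The missing idea is to make the boundary condition \emph{rotate with the site index}: pick $g_i\in G_i$ and set $w_n=g_{n\bmod\hat d}$. Then for every $k\in\mathcal A$ the class of $w_{n-k}$ is $(n-k)\bmod\hat d$, so by Proposition \ref{period} each matrix $P_{(k)}(w_{n-k},\cdot)$ is supported on $G_{n\bmod\hat d}$, and an induction on $n>r$ gives $X_n^{r,\mathbf w}\in G_{n\bmod\hat d}$ a.s., for every $r$. Any weak limit point as $r\to-\infty$ is an element of $\mathcal G(p)$ giving probability one to $\{X_0\in G_0\}$; repeating with the translated boundary condition $\hat w_n=w_{n+1}$ yields an element giving probability one to $\{X_0\in G_1\}$. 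Since $G_0\cap G_1=\emptyset$, these are distinct, and $|\mathcal G(p)|>1$. This is exactly the paper's proof; note that it never needs to analyze the class structure of an arbitrary compatible measure, which is where your argument gets stuck.
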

\begin{proof} By Proposition \ref{period} the classes $G_0 , G_1,
\ldots , G_{\hat d -1}$ are well defined. Let us select an element from
each class, say $ g_i \in G_i$, for $i =0, \ldots , \hat d -1$ and
define $ w_k = g_i $ if $k $ is congruent to $i$ $mod (\hat d)$, for
$k \in \mathbb{Z}$. Finally define
 $\mathbf{w}= (w_k: k \in \mathbb{Z}) $ and the translated
 $ \hat {\mathbf{w}}$, with $\hat w_k= w_{k+1}$, for $k \in \mathbb{Z}$.

Recall that any probability measure in  $\mathcal{G}(p)$  can be obtained as a
weak limit of the laws of $\mathbf {X}^{r, \mathbf{w}}$, for $r
\downarrow -\infty$. These laws give probability one to the event $
\{ X_0 \in G_0\}$, so this remains true in the limit. On the other hand the weak limits of the laws of
$\mathbf {X}^{r, \hat {\mathbf{w}}}$ give probability one to the
event $ \{ X_0 \in G_1\}$, therefore the measure are necessarily
distinct. This ends the proof.
\end{proof}

An example of application of Theorem \ref{t2} is the situation
examined in \cite{DPautor}. If the image of $\mathcal {A}$ under
$P_{(\cdot)}$ is made by the matrices $I_2$ and $J_2$, then
$P_{(\cdot)}$ is irreducible. This happens also if this image is the
singleton $\{J_2\}$. Now suppose $d(\mathcal {A})=1$ and, for $k \in
\mathcal {A}$ and odd, $P_{(k)}=J_2$ and for $k \in \mathcal {A}$
and even, $P_{(k)}=I_2$. Then, for any state $i \in G$, $s(\mathcal
{A}^*_i)$ is made only by even numbers, thus $d_i$ is a multiple of
$2$. It is precisely $2$ since $d(\mathcal {A})=1$. Thus
$P_{(\cdot)}$ is not aperiodic. The two elements of $\mathcal
{G}(p)$ constructed in Theorem \ref{t2} are Dirac measures supported
by the two coherent sequences alternating the two states, as defined
in \cite{DPautor}.
Clearly these measures are not stationary: as a matter of fact the
convex combination of them with equal weights is the unique
stationary element in $\mathcal {G}(p)$. More generally, under the
hypotheses of Theorem \ref{t2}, we will prove in Section
\ref{perfect} that $\mathcal {G}(p)$ contains only one stationary
measure.

We close this section focusing our attention on the set $\mathcal
{I}$ of invariant distributions for the stochastic matrix $\hat
P=\sum_{k \in \mathcal {A}} \theta_k P_{(k)}$.
\begin{lemma}\label{ultimo} The following statements hold:
\begin{itemize}
\item[1.]Let $\lambda \in \mathcal {I}$ and $\mathbf {w}$  be a configuration with $\lambda$-distributed
single-site marginals. Then, for any $r \in \mathbb {Z}$, $\mathbf {X}^{r, \mathbf {w}}$ has the same property.
\item[2.] For any $\lambda \in \mathcal {I}$  there is at least one element of $\mathcal {G}(p)$
with single-site marginals equal to $\lambda$. This element can be always chosen to be stationary.
\item[3.]A stationary element of $\mathcal {G}(p)$ has all its single-site
marginals equal to an element of $\mathcal {I}$.
\item[4.] If there is a unique (stationary) element in $\mathcal {G}(p)$, there is a
unique invariant measure for $\hat P$.
\end{itemize}
\end{lemma}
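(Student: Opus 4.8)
The plan is to prove the four statements in order, each building naturally on the previous ones.

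For statement 1, I would argue inductively along the sites $n > r$. If $\mathbf{X}^{r,\mathbf{w}}$ has all single-site marginals $\lambda$-distributed up to site $n-1$, then by the updating rule \eqref{bobob}, $P(X_n^{r,\mathbf{w}} = g) = \sum_{k \in \mathcal{A}} \theta_k E[P_{(k)}(X_{n-k}^{r,\mathbf{w}}, g)] = \sum_{k \in \mathcal{A}} \theta_k \sum_{i \in G} \lambda(i) P_{(k)}(i,g) = \sum_i \lambda(i) \hat{P}(i,g) = \lambda(g)$, using exactly that $\lambda \in \mathcal{I}$. Since for $n \leq r$ the marginal is $\lambda$ by hypothesis on $\mathbf{w}$, induction completes this step.

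For statement 2, fix $\lambda \in \mathcal{I}$ and choose $\mathbf{w}$ to be an i.i.d.\ $\lambda$-distributed configuration (a random boundary condition, which the setup explicitly allows). By statement 1, each $\mathbf{X}^{r,\mathbf{w}}$ has $\lambda$-distributed single-site marginals; passing to a weakly convergent subsequence $\mathbf{X}^{r_n,\mathbf{w}}$ as $r_n \to -\infty$ produces an element $\nu \in \mathcal{G}(p)$, and single-site marginals are preserved under weak convergence in the product topology, so $\nu$ still has $\lambda$-distributed single-site marginals. To get a stationary such element, apply the Cesàro-average-of-shifts construction mentioned in the introduction: the averaged measure is stationary, lies in $\mathcal{G}(p)$ (which is convex and closed under shifts combined with weak limits), and still has every single-site marginal equal to $\lambda$ because each shifted copy does and the marginal is an affine functional.

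For statement 3, let $\mu \in \mathcal{G}(p)$ be stationary with common single-site marginal $\lambda$ (common by stationarity). Using compatibility, $\lambda(g) = P(X_0 = g) = \sum_{k} \theta_k E[P_{(k)}(X_{-k}, g)] = \sum_k \theta_k \sum_i \lambda(i) P_{(k)}(i,g) = (\lambda \hat{P})(g)$, so $\lambda \in \mathcal{I}$. Statement 4 is then immediate: if $\mathcal{G}(p)$ has a unique element, it is stationary (as noted in the introduction), so by statement 3 it determines an invariant $\lambda \in \mathcal{I}$; conversely, if $\lambda, \lambda' \in \mathcal{I}$ were distinct, statement 2 would yield two elements of $\mathcal{G}(p)$ with distinct single-site marginals, hence two distinct elements, contradicting uniqueness.

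The only mildly delicate point — not really an obstacle — is making sure in statement 2 that weak limits along subsequences genuinely land in $\mathcal{G}(p)$ and that single-site marginals pass to the limit; both follow from the identification of infinite volume limits with compatible measures (continuity of the kernel, already invoked in the excerpt) and from the fact that cylinder-set probabilities, in particular $P(X_0 = g)$, are continuous functionals for the product topology.
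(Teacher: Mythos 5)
Your proof is correct and follows essentially the same route as the paper: induction plus the interchange of sums for part 1, weak limits and Ces\`aro averaging of shifts for part 2, the stationarity computation for part 3, and the deduction for part 4. Your argument for part 4 is in fact slightly more complete than the paper's one-line remark, since you correctly note that it relies on part 2 (to realize every $\lambda\in\mathcal{I}$ as the marginal of some element of $\mathcal{G}(p)$) and not only on part 3.
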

\begin{proof}
For 1. it is enough to notice that, by induction on $n>r$, and interchanging the two sums of positive terms,
\begin{equation}\label{diciotto}
P(  X^{r, \mathbf{w}}_n =g  ) = E (P(  X^{r, \mathbf{w}}_n =g \, |\,
X^{r, \mathbf{w}}_i, \, i<n )) = \sum_{k \in \mathcal{A}} \theta_k
(\lambda P_{(k)}) (g)= (\lambda \hat P) (g) = \lambda
 (g) .
\end{equation}
For 2. take $\mathbf {X}^{r, \mathbf {w}}$  as above and send $r$ to $-\infty$: any
weak limit point will be in $\mathcal {G}(p)$ and it will keep the single-site marginals
equal to $\lambda$. By taking Cesaro averages on a window increasing to $\mathbb {Z}$ and
going to the limit one gets at least one stationary process with single site marginals still equal to $\lambda$.
For 3., let $\mathbf {X}\in \mathcal {G}(p)$ be stationary and call $\lambda$ its single-site
marginals: then, similarly to \eqref{diciotto}
\begin{equation}
\lambda(g)=P(  X_n =g  ) = E (P(  X_n =g \, |\,
X_i, \, i<n )) = \sum_{k \in \mathcal{A}} \theta_k
(\lambda P_{(k)}) (g)= ( \lambda \hat P) (g).
\end{equation}
4. is an immediate consequence of 3.
\end{proof}

By the remark following Proposition \ref{decompose}, it is
immediately seen that $\mathcal {I}$ has a single element if and
only if $P_{(\cdot)}$ is essentially irreducible. So this is a
necessary condition for the existence of a unique stationary element
in $\mathcal {G}(p)$. At the end of Section \ref{perfect} we will
able to prove that this condition is also sufficient.

\section{Main results}\label{princris}

After the results of the previous section, it remains to consider
transition kernels of the form \eqref{kern} with a corresponding
$G$-stochastic function $P_{(\cdot)}$ which is irreducible and
aperiodic with $d ( \mathcal{A})=1$. In this section we prove that
uniqueness holds for all of them. Here is the main result.

\begin{theorem}\label{t1}
Let $p$ have the form \eqref{kern} and assume that $P_{(\cdot)}$ is
irreducible and aperiodic. Then $ \mathcal{G}(p)$ has a unique
element $\mu$, with single-site marginals equal to $\hat {\lambda}$,
the unique invariant distribution of $\hat P$.
\end{theorem}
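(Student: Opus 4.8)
The plan is to prove uniqueness via a coupling/coalescence argument on the backward random walks, exploiting the key feature of imitation kernels: the value $X_n^{r,\mathbf{w}}$ depends on the boundary condition $\mathbf{w}$ only through the single site $w_{V_r^{(n)}}$ at which the random walk $\mathcal{T}^{(n)}$ first lands below $r$. First I would fix a finite window $\Lambda \subset \mathbb{Z}$ and two boundary configurations $\mathbf{w}, \mathbf{w}'$, and construct the processes $\mathbf{X}^{r,\mathbf{w}}$ and $\mathbf{X}^{r,\mathbf{w}'}$ on the same probability space using the same samples $(K_m, U_m)$, hence the same random walks $\mathcal{T}^{(n)}$ and the same landing sites $V_r^{(n)}$ for all $n$. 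The two processes restricted to $\Lambda$ then differ only because, at the landing sites, one starts from $w_{V_r^{(n)}}$ and the other from $w'_{V_r^{(n)}}$. The goal is to show that as $r \to -\infty$ the influence of this difference washes out, uniformly in $\mathbf{w},\mathbf{w}'$; since every element of $\mathcal{G}(p)$ is an infinite-volume limit, this forces $|\mathcal{G}(p)| = 1$.

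The heart of the matter is a \emph{forward} coupling along a single random walk trajectory. Consider the sequence of stochastic matrices $P_{(K_{T_k^{(n)}})}$ applied as the walk is run backward from $n$ down past $r$; equivalently, for a word $\mathbf{a} \in \mathcal{A}^*$ realized with probability $\theta_{\mathbf{a}}$, we apply $P_{\mathbf{a}}$. By irreducibility and aperiodicity with $d(\mathcal{A})=1$, there exists a single word $\mathbf{a}^* \in \mathcal{A}^*$ and a state $j^* \in G$ such that $P_{\mathbf{a}^*}(i,j^*) > 0$ for every $i \in G$ — this is the $G$-stochastic analogue of the standard fact that an irreducible aperiodic stochastic matrix has a power with a strictly positive column, and it follows by combining Proposition \ref{period} (with $\hat d = 1$) with the semigroup structure of $s(\mathcal{A}^*)$: one first reaches $j^*$ from any $i$ in a number of steps of some fixed depth, using that $s(\mathcal{A}^*)$ contains all large enough integers and that all periods $d_i$ equal $1$. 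Concretely, the event that, somewhere along the trajectory between $r$ and $n$, a block of increments spelling $\mathbf{a}^*$ occurs together with the corresponding $U$'s falling in a Borel set $\Delta^*$ of positive measure forcing the coupling function $f_{\mathbf{a}^*}$ to output $j^*$ regardless of its input state — this event has a positive probability $c = \theta_{\mathbf{a}^*}\,\mathrm{Leb}(\Delta^*) > 0$ at each ``attempt''. Because the trajectory from $n$ to below $r$ has, with probability tending to $1$ as $r \to -\infty$, arbitrarily many disjoint stretches long enough to contain such a block, the probability that \emph{no} such synchronizing block occurs tends to $0$. Once it occurs, $X_n^{r,\mathbf{w}}$ and $X_n^{r,\mathbf{w}'}$ have been forced through the common value $j^*$ and thereafter agree (they are driven by the same $K$'s and $U$'s). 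Doing this simultaneously for all $n \in \Lambda$ — the finitely many walks eventually coalesce with each other too, or at worst each independently gets synchronized — gives $P(X_n^{r,\mathbf{w}} \ne X_n^{r,\mathbf{w}'} \text{ for some } n \in \Lambda) \to 0$.

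The main obstacle, I expect, is making the ``arbitrarily many disjoint attempts'' argument rigorous \emph{without} assuming $\theta$ is coalescent: the random walk from $n$ may take very large jumps, so one must argue that the number of sites visited strictly between $r$ and $n$ still grows unboundedly (in probability) as $r \to -\infty$, and that synchronization can be attached to visited sites rather than to all integer sites. One clean way is to note that each visited site $T_k^{(n)}$ independently ``initiates'' a success with probability at least $c' > 0$ — namely the probability that the \emph{next} $s(\mathbf{a}^*)$ increments along the walk spell $\mathbf{a}^*$ and the $U$'s cooperate — and that the walk visits infinitely many sites before hitting $-\infty$ almost surely (it is a downward random walk with i.i.d.\ positive increments, so $M_r^{(n)} \to \infty$ as $r \to -\infty$ a.s.). Hence the number of independent success-attempts before crossing $r$ is $M_r^{(n)} \to \infty$, so the probability of at least one success tends to $1$. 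Once this probabilistic estimate is in place, the conclusion that all infinite-volume limits coincide is immediate, and Lemma \ref{ultimo}(4) together with part 3 identifies the common single-site marginal as the unique invariant distribution $\hat\lambda$ of $\hat P$, completing the proof.
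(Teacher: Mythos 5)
Your overall strategy (same $(K_m,U_m)$ for all boundary conditions, synchronization events attached to the backward walk, $M_r^{(n)}\to\infty$) is the right one and matches the paper's, but the step on which everything rests is false. You claim that irreducibility and aperiodicity yield a \emph{single} word $\mathbf{a}^*\in\mathcal{A}^*$ and a state $j^*$ with $P_{\mathbf{a}^*}(i,j^*)>0$ for all $i\in G$. Take $G=\{1,2\}$, $\mathcal{A}=\{1,2\}$, $P_{(1)}=I_2$, $P_{(2)}=J_2$ (the matrices of \eqref{dimenticato}): this kernel is irreducible, aperiodic with $d(\mathcal{A})=1$, and uniqueness holds (Theorem \ref{vecchionuovo}), yet \emph{every} product $P_{\mathbf{a}}$ is a permutation matrix, so no word has a positive column and no choice of $\Delta^*$ can make $f_{\mathbf{a}^*}(\cdot,u)$ constant. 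What Proposition \ref{period} plus the semigroup structure actually give is: for each \emph{pair} $(i,j)$ a word $\mathbf{b}_{i,j}$ of a \emph{common depth} $n_0$ with $P_{\mathbf{b}_{i,j}}(i,j)>0$; you have silently merged these into one word. The paper's fix is exactly to keep the whole family $\mathcal{B}=\{\mathbf{b}_{i,j}\}$ and exploit the randomness of \emph{which} word the walk spells: conditionally on the walk traversing a depth-$n_0$ stretch along some word of $\mathcal{B}$ (an event of probability $\rho=\sum_{\mathbf{b}\in\mathcal{B}}\theta_{\mathbf{b}}$ per attempt), the effective transition matrix is the mixture $Q=\rho^{-1}\sum_{\mathbf{b}\in\mathcal{B}}\theta_{\mathbf{b}}P_{\mathbf{b}}$, which \emph{is} strictly positive, and a Skorohod coupling function for $Q$ then collapses the dependence on the input state with probability $\epsilon>0$. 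In the example above this amounts to mixing $(1,1)$ and $(2)$, giving $Q\propto\theta_1^2 I_2+\theta_2 J_2>0$. Without this averaging over words, the argument simply does not apply to noiseless imitation kernels, which are the central case of the paper.

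A secondary, lesser issue: for several sites $n\in\Lambda$ you say the walks ``at worst each independently get synchronized,'' but independence of the synchronization attempts along two non-coalescing walks is not automatic, since the walks could keep interleaving and the designated depth-$n_0$ stretches could overlap. The paper resolves this by invoking the dichotomy for the von Schelling process (distance between two walks either hits $0$ or tends to infinity a.s.), which guarantees that eventually the synchronization intervals of distinct non-coalescing walks are disjoint, hence driven by disjoint sets of $(K,U)$'s. Your identification of the marginal $\hat\lambda$ via Lemma \ref{ultimo} is fine.
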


Before giving the proof of this result, we prove a weaker result in
order to present the basic argument in a simpler context. Recall
that under the above assumptions, $\hat P$ has a unique invariant
distribution.

\begin{lemma}\label{lemmatec}
Let $p$ have the form \eqref{kern} and let $P_{(\cdot)}$ be
irreducible and aperiodic. Then, for any $n \in\mathbb{Z}$ the
distribution of $X^{r, \mathbf{w}}_n$ converges to $\hat {\lambda}$,
as $r \to -\infty$, irrespectively of $\mathbf{w} \in
G^{\mathbb{Z}}$.
\end{lemma}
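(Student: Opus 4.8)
The goal is to show that for an irreducible, aperiodic $G$-stochastic function with $d(\mathcal{A})=1$, the law of $X^{r,\mathbf{w}}_n$ converges to $\hat\lambda$ as $r\to-\infty$, uniformly in the boundary condition $\mathbf{w}$. I would exploit the backward random walk representation \eqref{F}: to compute $X^{r,\mathbf{w}}_n$ one runs the walk $\mathcal{T}^{(n)}$ with i.i.d. $\theta$-decrements, reads $w_{V^{(n)}_r}$ at the first landing below $r$, and applies the composed coupling function $f_{\mathbf{a}}$ where $\mathbf{a}=(K_n,K_{n-K_n},\dots)$ is the sequence of decrements used before crossing $r$. The key point is that, because each $P_{(k)}$ is a \emph{genuine} stochastic matrix (noise!), with positive probability the composed matrix $P_{\mathbf{a}}$ along a block of steps becomes a rank-one matrix — a matrix all of whose rows are equal — and once that happens the value produced no longer depends on $w_{V^{(n)}_r}$ at all, hence not on $\mathbf{w}$.

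First I would establish the algebraic input: by irreducibility and aperiodicity, there exists a word $\mathbf{b}\in\mathcal{A}^*$ with $P_{\mathbf{b}}(i,j)>0$ for \emph{all} $i,j\in G$ (a standard consequence, analogous to "some power of an irreducible aperiodic stochastic matrix is strictly positive"; one combines the partition-free situation $\hat d=1$ of Proposition \ref{period} with the fact that $s(\mathcal{A}^*)$ eventually contains every large integer). Fix such a $\mathbf{b}$ of depth $s(\mathbf{b})=N$ and let $\delta=\min_{i,j}P_{\mathbf{b}}(i,j)>0$. Then for the coupling functions one can choose a Borel set $\Delta^\star\subset[0,1]^{|\mathbf{b}|}$ with $\mathrm{Leb}(\Delta^\star)\geq\delta$ on which $f_{\mathbf{b}}(\cdot,u)$ is the \emph{constant} function (send the common mass $\delta$ available in every column to a fixed block of the unit cube, independently of the starting state). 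So: whenever the decrement sequence spells out $\mathbf{b}$ and the corresponding uniforms land in $\Delta^\star$, the value $X^{r,\mathbf{w}}_{(\text{current site})}$ is a fixed constant regardless of everything further in the past — in particular regardless of $\mathbf{w}$ and of $r$.

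Next I would run the walk $\mathcal{T}^{(n)}$ and scan for the first occurrence of a "good block" — a consecutive run of $|\mathbf{b}|$ steps whose decrements equal $\mathbf{b}$ and whose uniforms lie in $\Delta^\star$. Since the pairs $(K,U)$ are i.i.d. and each block is good with probability $p^\star=\theta_{\mathbf{b}}\cdot\mathrm{Leb}(\Delta^\star)>0$, a good block occurs within a geometric number $T^\star$ of steps, almost surely finite, with $T^\star$ not depending on $r$ or $\mathbf{w}$. On the event $\{T^{(n)}_{k}>r \text{ for all } k\le (\text{index of the end of the first good block})\}$ — call it $E_r$ — the value $X^{r,\mathbf{w}}_n$ equals $F_{r,n}$ applied through the good block to a \emph{deterministic constant}, hence it does not depend on $\mathbf{w}$ at all. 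Moreover $P(E_r)\to 1$ as $r\to-\infty$, because the (a.s. finite) site reached at the end of the first good block is fixed once $(K_m,U_m)$ are fixed, so it eventually exceeds any threshold; one bounds $1-P(E_r)$ by $P(\text{walk descends below }r\text{ before completing a good block})$, which tends to $0$ by dominated convergence. This already gives that $X^{r,\mathbf{w}}_n$ and $X^{r,\mathbf{w}'}_n$ can be coupled to agree with probability $\to1$, so all boundary conditions yield the same weak limit; in particular the limit is a stationary compatible measure by the Cesàro argument already in the paper, and by part 3 of Lemma \ref{ultimo} its single-site marginal is invariant for $\hat P$, hence equals $\hat\lambda$. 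Finally, to pin the limiting law of $X^{r,\mathbf{w}}_n$ itself to $\hat\lambda$: start from $\mathbf{w}$ with $\hat\lambda$-distributed single-site marginals; by part 1 of Lemma \ref{ultimo}, $X^{r,\mathbf{w}}_n$ is then exactly $\hat\lambda$-distributed for every $r$, so its weak limit is $\hat\lambda$; but we just showed the limit is independent of $\mathbf{w}$, so $X^{r,\mathbf{w}}_n\Rightarrow\hat\lambda$ for every $\mathbf{w}$.

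The main obstacle is the first step — producing the word $\mathbf{b}$ with $P_{\mathbf{b}}$ strictly positive and extracting from it a coupling-function block that collapses to a constant with quantifiable probability $p^\star>0$. Irreducibility gives, for each ordered pair $(i,j)$, a word $\mathbf{a}_{ij}$ with $P_{\mathbf{a}_{ij}}(i,j)>0$; aperiodicity plus $d(\mathcal{A})=1$ is what lets one pad all these words to a common depth (so they can be concatenated into a single $\mathbf{b}$) — this is exactly the place where the argument would break for imitation kernels \emph{without} noise, since there no $P_{\mathbf{b}}$ is ever rank one unless it is, and the mechanism must instead be coalescence of random walks. Everything after the algebraic step is a routine geometric-trials-plus-dominated-convergence estimate.
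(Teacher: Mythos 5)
There is a genuine gap at the very step you identify as the ``main obstacle'': the claim that irreducibility and aperiodicity of $P_{(\cdot)}$ yield a \emph{single} word $\mathbf{b}\in\mathcal{A}^*$ with $P_{\mathbf{b}}(i,j)>0$ for all $i,j\in G$ is false in general. The analogy with ``some power of an irreducible aperiodic matrix is strictly positive'' breaks down because here the product along a word mixes \emph{different} matrices: padding to a common depth gives you, for each ordered pair $(i,j)$, some word $\mathbf{b}_{i,j}$ of that depth with $P_{\mathbf{b}_{i,j}}(i,j)>0$, but these words differ from pair to pair, and no concatenation of them need produce an entrywise positive product. Concretely, take $G=\{1,2\}$, $\mathcal{A}=\{1,2\}$, $P_{(1)}=P_{(2)}=J_2$ as in \eqref{dimenticato}: this $G$-stochastic function is irreducible and aperiodic (words of even length fix each state and one finds depths $2$ and $3$, so $\hat d=1$), the lemma applies and uniqueness holds by Theorem \ref{vecchionuovo}, yet every $P_{\mathbf{a}}$ is a permutation matrix and no rank-one block can ever be extracted. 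You notice yourself that your mechanism ``would break for imitation kernels without noise'' and propose to fall back on coalescence of random walks there --- but the lemma is stated (and must hold) for noiseless kernels as well, and dispensing with the coalescence hypothesis is precisely one of the points of the paper, so this fallback is not available.

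The paper's proof repairs exactly this point. Instead of one positive word it collects the whole family $\mathcal{B}=\{\mathbf{b}_{i,j}\}$ of words of a common depth $n_0$, and observes that, \emph{conditionally on the event that the backward walk traverses some path of $\mathcal{B}$}, the induced transition matrix is the mixture $Q=\rho^{-1}\sum_{\mathbf{b}\in\mathcal{B}}\theta_{\mathbf{b}}P_{\mathbf{b}}$ of \eqref{Q}, which \emph{is} entrywise positive because each pair $(i,j)$ receives a positive contribution from its own $\mathbf{b}_{i,j}$. The Doeblin/Skorohod coupling is then applied to $Q$: the ``noise'' that collapses the dependence on $\mathbf{w}$ comes from the randomness of \emph{which} word of $\mathcal{B}$ is traversed, not from positivity of any single product matrix. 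Everything else in your argument --- the i.i.d. block-scanning giving infinitely many trials, the event $E_r$ whose probability tends to $1$, and the identification of the limit via parts 1 and 3 of Lemma \ref{ultimo} applied to a boundary condition with $\hat\lambda$-distributed marginals --- matches the paper and is sound once the positive matrix is obtained in this conditional, rather than pathwise, sense.
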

\begin{proof}
Without loss of generality, we take $d(\mathcal {A})=1$ and
$n=0$. We are going to couple in a suitable way the random variables
$X_0^{r,\mathbf{w}}$ for any $\mathbf{w} \in G^{\mathbb{Z}}$, in
such a way that, as $r \rightarrow -\infty$, they all converge to
the same random variable a.s.

Let $i_0$ be a fixed state in $G$. By irreducibility the additive
semigroup $s(\mathcal {A}_{i_0}^*)$ is non empty and, by
aperiodicity, it has a gcd equal to $1$: by the characterization of
these semigroups, there exists $m_0 \in \mathbb {N}_+$ such that,
for $m \geq m_0$, then $m \in s(\mathcal {A}_{i_0}^*)$. Now use
again irreducibility to prove that there exists $n_0 \in
\mathbb{N}_+$ such that, for any $i, j \in G $ one can choose a word
$\mathbf {b}_{i,j} \in \cal {A}^*$ with $P_{\mathbf
{b}_{i,j}}(i,j)>0$ and $s(\mathbf {b}_{i,j})=n_0$. We call
$\mathcal{B}$ the collection of these (distinct) words. These words
can be identified with paths in $\Gamma_{\theta}$ with the same
depth $n_0$. For any $\mathbf {b} \in \mathcal{B}$ we call
${\theta}_{\mathbf {b}}$ its probability and let $\rho=\sum_{\mathbf
{b} \in \mathcal{B}} {\theta}_{\mathbf {b}}$ be the sum of the
probabilities of these paths.

Next we are going to construct the random walk $\mathcal
{T}^{(0)}=\{T^{(0)}_k, k \in \mathbb{N}\}$ by generating
$\{K_{T^{(0)}_k}, k \in \mathbb{N}\}$, according to \eqref{rwalk}.
Let $(l_h^{(0)},u_h^{(0)}]$ be the interval between whose endpoints
a path corresponding to a word in $\mathcal {B}$ appears for the
$h$-th time in the random walk $\mathcal {T}^{(0)}$ (it is
understood that such a word can vary with $h$). By definition
$u_h^{(0)}-l_h^{(0)}=n_0$ (see Fig. 2). The independence of the
decrements of the random walk ensures that, with probability $1$, a
sub-walk in $\mathcal{B}$ will appear infinitely many times. Once
$\mathcal {T}^{(0)}$ reaches the site $V_r^{(0)}$, the value
$X_0^{r,\mathbf{w}}$ can be obtained from $w_{V_r^{(0)}}$ by using
the recursion in \eqref{back}, with an exception for each of the
sub-walks in $\mathcal {B}$ identified before. Suppose that one of
these sub-walks joins $s=T^{(0)}_k$ with $s-n_0=T^{(0)}_{k+m}$, with
$s-n_0>r$. Then, conditionally to this event, the transition from
$X_{s-n_0}^{{r,\mathbf {w}}}$ to $X_{s}^{{r,{\mathbf{w}}}}$ follows
the transition matrix
\begin{equation}\label{Q}
Q=\frac {1}{\rho}\sum_{\mathbf {b} \in \mathcal{B}}\theta_{\mathbf {b}}P_{\mathbf {b}}.
\end{equation}

\medskip

$$ \xymatrix{
\bullet & \bullet  \ar@/_1.5pc/[l]& \bullet & \bullet
\ar@/_1.5pc/[ll]  & \bullet & \bullet &\bullet \ar@/^1.5pc/[lll] &
\bullet & \bullet  \ar@/_1.5pc/[ll] &\bullet  \ar@/_1.5pc/[l] &
\bullet  & \bullet  \ar@/^1.5pc/[ll] } $$

\smallskip

\begin{center}\label{fig2}
\small{Fig. 2. An example with $\mathcal {B}=\{(1,2),(2,1)\}$. Paths in $\mathcal {B}$ appear above the sites. }
\end{center}

\medskip

This matrix is positive by construction, therefore there exists a
coupling function $f_*:G \times [0,1] \rightarrow G$ with the
following property. If $U$ is uniform in $[0,1]$, $f_*(g,U)$ has the
law $Q(g,\cdot)$, and there exists $\epsilon>0$ such that for
$u<\epsilon$
\begin{equation}\label{coupfun}
f_*(g,u)=f_*(h,u), \forall g, h \in G  .
\end{equation}
It is enough to number the states and use the Skorohod
representation. Thus any transition from the left  endpoint to the
right one of the segments $(l_h^{(0)},u_h^{(0)}]$  can be performed
by drawing some independent random variable $U$ with uniform
distribution in $[0,1]$. When $U<\epsilon$ occurs, then
$X_{s}^{{r,{\mathbf{w}}}}$ does not depend on
$X_{s-n_0}^{{r,{\mathbf{w}}}}$, and thus does not depend on
$\mathbf{w}$. Once this \emph{coupling} occurs, by following the
recursion \eqref{back} one obtains that $X_{0}^{{r,{\mathbf{w}}}}$
does not depend on $\mathbf{w}$ as well.

Since each time a sub-walk in $\mathcal {B}$ occurs the $U$ used by
the coupling function $f_*$ are independent, coupling will happen
with probability $1$ and thus, as $r \rightarrow -\infty$,
$X_{0}^{{r,{\mathbf{w}}}}$ converges a.s. to a random variable which
does not depend of $\mathbf{w}$. Finally take $\mathbf{w}$ with all
the single-site marginal equal to $\hat \lambda$.
Then, by Lemma \ref{ultimo}, $X^{r, \mathbf{w}}$ has law $\hat \lambda$ for any $r$, and
this law is kept by any weak limit point. This ends the proof.

\end{proof}

\begin{proof}[Proof of Theorem \ref{t1}]
Again, for any finite subset $\Lambda
\subset \mathbb {Z}$, we have to couple all the processes $\mathbf
{X}_{\Lambda}^{r, \mathbf{w}}$ for all $r \in \mathbb {Z}$ and
$\mathbf {w} \in G^{\mathbb {Z}}$ in such a way that, as $r \to
-\infty$, they converge a.s. to the same limit vector, hence they
share the same limit in law.

Thus, we proceed first to the construction of the random walks
$\mathcal {T}^{(n)}=\{T^{(n)}_k, k \in \mathbb{N}\}$, for $n \in
\Lambda$, by using the i.i.d. $\theta$-distributed random variables
$(K_m, m \in \mathbb{Z})$. The intervals $(l_h^{(n)},
u_h^{(n)}]$ where a path in $\mathcal {B}$ occurs for the $h$-th
time, $h=1,2,\ldots$ are located within the random walk $\mathcal
{T}^{(n)}$, for any $n \in \Lambda$, with the constraint that
\emph{distinct} intervals which overlap are discarded.

We recall that the distance between two different random walks can
be seen as a von Schelling process until the possible coalescence.
The results in \cite{Boudiba} imply that when $d(\mathcal {A} )=1$
it is recurrent if and only if the distribution $\theta$ is
coalescent.
 In this case any finite family of random walks coalesce a.s.; otherwise
 this process is transient and any pair of distinct random walks
 $\mathcal{T}^{(m)}$ and $\mathcal{T}^{(n)}$ either coalesce or
 their distance goes to infinity a.s.

The set $\Lambda$ is partitioned according to the following
equivalence relation: $m \equiv n$ whenever $\mathcal {T}^{(m)}$ and
$\mathcal {T}^{(n)}$ coalesce. If this happens let $S_{m,n}$ denote
their coalescence point, otherwise we set $S_{m,n}= -\infty$. For
each of the equivalence classes $C_1,\dots,C_l$, let
\begin{equation}\label{coapo}
S_{C_h}= \left \{
\begin{array}{ll}
  \inf \{S_{m,n}, m, n \in C_h\}, & \text{if } |C_h| \geq 2 , \\
  n, & \text{if } C_h =\{n \},\\
\end{array} \right .
\end{equation}
for $  h=1,\ldots,l$. For a coalescent distribution $\theta$,
$l=1$ a.s.   Otherwise  the random walks $\mathcal
{T}^{(S_{C_h})}$ do not coalesce, for $h=1,\ldots,l$ and their
distance goes to infinity   with probability $1$.

 Therefore there are
infinitely many non overlapping intervals where a path in $\mathcal
{B}$ occurs, for each of these random walks. With the same argument
used in the previous proof we get that, with probability $1$,
provided $r$ is sufficiently close to $-\infty$,
$X_{S_{C_h}}^{{r,{\mathbf{w}}}}$ converge to $\hat {\lambda}$, irrespectively of $\mathbf {w}$,
independently for any $h=1,\ldots,l$.
Since any random
walk $\mathcal {T}^{(m)}$, for $m \in \Lambda$, necessarily hits the
set $\{S_{C_1},\ldots, S_{C_l}\}$, by forward iteration of
\eqref{back} it is then possible to obtain the required limiting
values for $X_{n}^{{r,{\mathbf{w}}}}$, for all $n \in \Lambda$.
\end{proof}

Next we apply the previous theorem to the class of binary autoregressive kernels
examined in \cite{DPautor}. Theorem 3 in this reference proved
uniqueness under conditions (a) and (b) therein. Now condition (a) is
nothing but irreducibility and aperiodicity of the associated
$G$-stochastic function. Condition (b) assumes coalescence so, as a consequence of
Theorem~\ref{t1}, this condition appears to be
unnecessary for uniqueness. Altogether
we have the following uniqueness theorem.
\begin{theorem}\label{vecchionuovo}
Consider a transition kernel $p$ of the form \eqref{kern} with
$|G|=2$, $d(\mathcal {A})=1$ and the range of $P_{(\cdot)}$
contained in $\{I_2, J_2\}$, with $I_2$ and $J_2$ defined in
\eqref{dimenticato}. Then uniqueness holds for $p$ if and only if
there exists $m \in \mathcal {A}$ even with $P_{(m)}=J_2$, or at
least $m_1, m_2 \in \mathcal {A}$, both odd, with $P_{(m_1)}=J_2$
and $P_{(m_2)}=I_2$.
\end{theorem}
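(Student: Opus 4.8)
The plan is to translate the two properties under which Theorem~\ref{t1} grants uniqueness, namely irreducibility and aperiodicity of $P_{(\cdot)}$, into the explicit combinatorial alternative of the statement, and to check that whenever this alternative fails, non-uniqueness follows from Proposition~\ref{addnonuniq} or from Theorem~\ref{t2}. Write $\mathcal{A}=\mathcal{A}_I\sqcup\mathcal{A}_J$ with $\mathcal{A}_I=\{k\in\mathcal{A}:P_{(k)}=I_2\}$ and $\mathcal{A}_J=\{k\in\mathcal{A}:P_{(k)}=J_2\}$. Since $I_2$ and $J_2$ generate the two-element group, for any word $\mathbf{a}\in\mathcal{A}^*$ one has $P_{\mathbf{a}}=I_2$ if $\mathbf{a}$ contains an even number of letters lying in $\mathcal{A}_J$, and $P_{\mathbf{a}}=J_2$ otherwise. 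As $J_2$ is symmetric, this shows at once that $P_{(\cdot)}$ is irreducible exactly when $\mathcal{A}_J\neq\emptyset$; and when $\mathcal{A}_J=\emptyset$ the states $1$ and $2$ are two distinct closed classes, so $|\mathcal{G}(p)|>1$ by Proposition~\ref{addnonuniq}. Hence I may assume $\mathcal{A}_J\neq\emptyset$ and only the period remains to be analysed.

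For every $k\in\mathcal{A}$ the word $(k,k)$ has $P_{(k,k)}=P_{(k)}^2=I_2$ and depth $2k$, so $2k\in s(\mathcal{A}^*_i)$ for every $i\in G$; therefore $\hat d$ divides $\gcd\{2k:k\in\mathcal{A}\}=2\,d(\mathcal{A})=2$, and aperiodicity ($\hat d=1$) amounts to the existence of some $\mathbf{a}\in\mathcal{A}^*$ with $P_{\mathbf{a}}=I_2$ and $s(\mathbf{a})$ odd. I would then show that such a word exists if and only if $\mathcal{A}_J$ contains an even element \emph{or} $\mathcal{A}_I$ contains an odd element. For sufficiency: an odd $m\in\mathcal{A}_I$ gives $\mathbf{a}=(m)$; an even $m\in\mathcal{A}_J$ together with an odd $k\in\mathcal{A}$ (which exists because $d(\mathcal{A})=1$) gives $\mathbf{a}=(k)$ if $k\in\mathcal{A}_I$, or $\mathbf{a}=(k,m)$ if $k\in\mathcal{A}_J$, noting $P_{(k,m)}=J_2J_2=I_2$ and $s((k,m))=k+m$ odd. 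For necessity, if every element of $\mathcal{A}_J$ is odd and every element of $\mathcal{A}_I$ is even, then any $\mathbf{a}$ with $P_{\mathbf{a}}=I_2$ has an even number of odd letters from $\mathcal{A}_J$ and arbitrarily many even letters from $\mathcal{A}_I$, so $s(\mathbf{a})$ is even, whence $\hat d=2$.

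Putting this together, let $E_J,O_J$ (resp.\ $E_I,O_I$) abbreviate the statements that $\mathcal{A}_J$ (resp.\ $\mathcal{A}_I$) contains an even, resp.\ odd, element. Combining Theorem~\ref{t1} with the non-uniqueness provided by Proposition~\ref{addnonuniq} in the reducible case and by Theorem~\ref{t2} (applicable since $d(\mathcal{A})=1$) in the irreducible non-aperiodic case, uniqueness holds if and only if $(E_J\vee O_J)\wedge(E_J\vee O_I)$, which by distributivity equals $E_J\vee(O_J\wedge O_I)$ --- precisely the alternative in the statement. The ingredients (the group structure behind $P_{\mathbf{a}}$ and the parity bookkeeping for depths) are routine; the one point that must not be skipped is the repeated use of $d(\mathcal{A})=1$, once to pair an even $J$-element with some odd element of $\mathcal{A}$ and once so that Theorem~\ref{t2} applies when $\hat d=2$. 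I do not anticipate any genuine obstacle beyond stating the small Boolean case analysis cleanly.
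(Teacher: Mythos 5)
Your proposal is correct and follows essentially the same route as the paper's proof: translate the hypothesis into irreducibility ($J_2$ in the range of $P_{(\cdot)}$, i.e.\ $\mathcal{A}_J\neq\emptyset$) plus aperiodicity (excluding the single periodic configuration where odd indices carry $J_2$ and even indices carry $I_2$), then invoke Theorem~\ref{t1}, Proposition~\ref{addnonuniq} and Theorem~\ref{t2}. The paper states this in two lines as ``clear''; you simply supply the parity bookkeeping and the Boolean reduction explicitly, which is a faithful elaboration rather than a different argument.
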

\begin{proof}
It is clear that these conditions ensure that $J_2$ is contained in
the range of $P_{(\cdot)}$ (which is equivalent to irreducibility)
and that they exclude that the odd elements of $\mathcal {A}$ are sent by
$P_{(\cdot)}$ into $J_2$ and the even into $I_2$ (which is the only
periodic case).
\end{proof}

By putting together all the results proved so far, we finally get the following

\begin{theorem}\label{completo}
For a given kernel $p$ of the form \eqref{kern} one has uniqueness
if and only if the corresponding $G$-stochastic function
$P_{(\cdot)}$ is essentially irreducible and its single intercommunicating class is
aperiodic.
\end{theorem}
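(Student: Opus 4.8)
The plan is to present Theorem \ref{completo} as a concatenation of the results already established, peeling away the two possible sources of non-uniqueness in turn; no new idea is needed beyond careful bookkeeping.

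First I would dispose of \emph{reducibility}. If the $G$-stochastic function $P_{(\cdot)}$ has two or more closed intercommunicating classes, Proposition \ref{addnonuniq} gives $|\mathcal{G}(p)|>1$ at once, which already yields the "only if" half of the first condition. Conversely, assume essential irreducibility, i.e.\ a single closed intercommunicating class $R$ (the transient set $T$ of Proposition \ref{decompose} being irrelevant). By Proposition \ref{essentirred}, $\mathcal{G}(p)=\mathcal{G}(p_R)$, where $p_R$ is the restriction of $p$ to $R$, so uniqueness for $p$ is equivalent to uniqueness for $p_R$; and by part~1 of Proposition \ref{decompose}, $P_{(\cdot)}|_R$ is an \emph{irreducible} $G$-stochastic function on $R$ with the same index set $\mathcal{A}$. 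This reduces the problem to the irreducible case.

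Next I would dispose of \emph{periodicity}. If $d(\mathcal{A})>1$, Proposition \ref{sottografo} says uniqueness for $p_R$ holds if and only if it holds for the derived kernel $\bar{p}_R$ obtained from $p_R$ via \eqref{redker}, whose index set has gcd $1$; and since $\bar{P}_{(l)}=P_{(l\,d(\mathcal{A}))}$ forces $\bar{P}_{\bar{\mathbf{a}}}=P_{\mathbf{a}}$ under the obvious bijection of words, the associated $G$-stochastic function stays irreducible. So I may assume an irreducible $G$-stochastic function on $R$ with $d(\mathcal{A})=1$, for which Proposition \ref{period} supplies a well-defined common period $\hat{d}$, and "aperiodic" in the statement means $\hat{d}=1$. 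Now Theorem \ref{t2} gives $|\mathcal{G}(\bar{p}_R)|>1$ when $\hat{d}>1$, while Theorem \ref{t1} gives a unique element when $\hat{d}=1$. Tracing these conclusions back through Propositions \ref{sottografo} and \ref{essentirred} --- non-uniqueness of the reduced kernel propagates up to $p_R$ and then to $p$, and likewise for uniqueness --- yields exactly the stated equivalence: $|\mathcal{G}(p)|=1$ if and only if $P_{(\cdot)}$ has a single closed intercommunicating class and that class is aperiodic.

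I do not expect a genuine obstacle here, since the theorem is a synthesis; the only points that deserve an explicit line are (i) verifying that both reductions --- restriction to $R$, and passage to $\bar{p}$ --- preserve irreducibility of the associated $G$-stochastic function, the first being part of Proposition \ref{decompose} and the second resting on the word identity $\bar{P}_{\bar{\mathbf{a}}}=P_{\mathbf{a}}$ above; and (ii) being explicit that $d(\mathcal{A})$ is unchanged by the restriction to $R$, so that the two reductions commute and may be carried out in either order. Once these are in place the proof is just the cited chain.
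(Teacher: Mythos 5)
Your proposal is correct and follows essentially the same route as the paper, which offers no written argument for Theorem \ref{completo} beyond ``putting together all the results proved so far'': the intended synthesis is exactly your chain (Proposition \ref{addnonuniq} and Proposition \ref{essentirred} to reduce to a single closed class, Proposition \ref{sottografo} to reduce to $d(\mathcal{A})=1$, then Theorem \ref{t2} for the periodic case and Theorem \ref{t1} for the aperiodic one). Your explicit checks that both reductions preserve irreducibility, and that ``aperiodic'' must be read through the $\bar p$ reduction when $d(\mathcal{A})>1$, are sound and if anything more careful than the paper itself.
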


\section{Perfect  and $\varepsilon$-perfect
simulation}\label{perfect}

In this section, under the conditions of Theorem \ref{t1}  we
construct simulation algorithms for the unique element $\mu$ of
$\mathcal {G}(p)$ on a finite set of sites $\Lambda \subset \mathbb
{Z}$. Recall that we have assumed without loss of generality that
$d(\mathcal {A})=1$, so the coalescence property of the distribution
$\mathbf {\theta}$ is equivalent to the recurrence of the
corresponding von Schelling process.

The problem of determining conditions for coalescence of $\theta$
has been recently addressed in \cite{PW11}, where it has been
established that the following tail condition
$$
\sum_{k=1}^{\infty}(\sum_{n=k}^{\infty} \theta_n)^2 < + \infty
$$
is sufficient. Notice that this is weaker than the finiteness of the
mean of $\mathbf {\theta}$, which has the same form but without the
square. The latter is equivalent to the positive recurrence of the
corresponding von Schelling process, which is in turn equivalent to
the finiteness of the mean of the coalescence time of any two of the
random walks $\mathcal {T}^{(n)}, n \in \mathbb {Z}$. In \cite{PW11}
an example of transient von Schelling process was also given.

In the uniqueness regime, if $\mathbf {\theta}$ is coalescent a
perfect simulation algorithm for the marginal distribution
$\mu_{\Lambda}$ in any finite window $\Lambda\subset \mathbb {Z}$
can be designed. Indeed, in this case the coalescence point
$S_{m,n}$ is finite a.s. for any $n, m \in \Lambda$, and so is the
coalescence point
\begin{equation}\label{slambda}
S_{\Lambda}=\inf \{S_{m,n}, m \neq n \in \Lambda\} .
\end{equation}
Moreover these coalescence points are all adapted to the filtration
\begin{equation}\label{filtra}
\mathcal{F}^{\max \Lambda}_s=\sigma (K_n, s< n \leq \max \Lambda),
\,\, s< \min \Lambda,
\end{equation}
which makes them accessible through sequential simulation.
The following proposition essentially coincides with a result
appearing in \cite{DPautor} in a particular case.

\begin{theorem}\label{caso1}
Consider a transition kernel $p$ of the form \eqref{kern} with
$d(\mathcal {A})=1$, a coalescent distribution $\theta$, and
$P_{(\cdot)}$ irreducible and aperiodic. Let $\hat \lambda$ be the
unique invariant distribution of the stochastic matrix $\hat {P}$.
Then, for any finite $\Lambda\subset \mathbb {Z}$  the random vector
$\mathbf {X}_{\Lambda}=(X_n, n \in \Lambda)$ given by Simulation
Algorithm 1 is distributed as  $\mu_{\Lambda}$, the marginal on
$\Lambda$ of the unique element $\mu$ of $\mathcal {G}(p)$.

\medskip

{\bf Simulation Algorithm 1}

\begin{itemize}
    \item[1.] Construct the random walks $\mathcal{T}^{(n)}, n \in
    \Lambda$ up to their coalescence time $S_{\Lambda}$ (see Fig. 3);
    \item[2.] Sample $\tilde {X}_{S_{\Lambda}} \sim \hat {\lambda}$;
    \item[3.] Keeping the $K_m$ used in the first step and sampling $U_m$ i.i.d. uniform
    in $(0,1)$ for $S_{\Lambda}<m\leq \max {\Lambda}$, compute
    $$X_n=F_{S_{\Lambda},n}(    K_m , U_m , S_\Lambda < m \leq n;
    \tilde {X}_{S_{\Lambda}}    ), n \in \Lambda.$$
\end{itemize}
\end{theorem}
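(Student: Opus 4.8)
The plan is to show that Simulation Algorithm 1 produces a sample of $\mathbf{X}_\Lambda^{r,\mathbf{w}}$ for $r$ pushed to $-\infty$, exploiting that the output of the forward recursion \eqref{back} stabilizes once the random walks have descended below the coalescence point $S_\Lambda$ and the value at $S_\Lambda$ has been fixed correctly. First I would observe that, by the coalescence of $\theta$, all the random walks $\mathcal{T}^{(n)}$, $n\in\Lambda$, coalesce a.s., so $S_\Lambda$ defined in \eqref{slambda} is a.s. finite; moreover by Proposition \ref{period} and the discussion around \eqref{filtra}, $S_\Lambda$ is a stopping time with respect to the filtration $\mathcal{F}^{\max\Lambda}_s$, hence Step 1 is a legitimate sequential simulation that terminates a.s.

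Next I would argue that, for every $r< S_\Lambda$ (in particular for $r$ arbitrarily close to $-\infty$) and every boundary configuration $\mathbf{w}$, each random walk $\mathcal{T}^{(n)}$, $n\in\Lambda$, must pass through the single site $S_\Lambda$: indeed once any two of the walks have coalesced they stay together forever, so after the first pairwise coalescence they move as a single walk, and $S_\Lambda$ is the rightmost site common to all of them before they first merge — below $S_\Lambda$ they are literally the same walk. Consequently the values $X_n^{r,\mathbf{w}}$, $n\in\Lambda$, are obtained by forward iteration of \eqref{back} starting from the single random variable $X_{S_\Lambda}^{r,\mathbf{w}}$, using the same $K_m$'s and $U_m$'s for $S_\Lambda< m\le \max\Lambda$; that is, $\mathbf{X}_\Lambda^{r,\mathbf{w}}=\big(F_{S_\Lambda,n}(K_m,U_m, S_\Lambda<m\le n; X_{S_\Lambda}^{r,\mathbf{w}})\big)_{n\in\Lambda}$, which has exactly the functional form produced by Step 3 once we feed it $\tilde{X}_{S_\Lambda}$ in place of $X_{S_\Lambda}^{r,\mathbf{w}}$.

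The remaining point is to identify the correct law of the value at $S_\Lambda$. By Theorem \ref{t1} (or directly by Lemma \ref{lemmatec}) the distribution of $X_{S_\Lambda}^{r,\mathbf{w}}$ converges, as $r\to-\infty$, to $\hat\lambda$, the unique invariant distribution of $\hat P$, irrespective of $\mathbf{w}$; indeed conditionally on $\mathcal{T}^{(\min\Lambda)}$ reaching the site $S_\Lambda$, the variable $X_{S_\Lambda}^{r,\mathbf{w}}$ is constructed from the boundary value $w_{V_r^{(\min\Lambda)}}$ by the backward recursion below $S_\Lambda$, and Lemma \ref{lemmatec} gives a.s. convergence to $\hat\lambda$. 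Therefore, drawing $\tilde X_{S_\Lambda}\sim\hat\lambda$ in Step 2, independently of the $U_m$'s used above $S_\Lambda$ (which are independent of everything below $S_\Lambda$), reproduces the limiting law. Since the pair $(S_\Lambda, (K_m)_{S_\Lambda<m\le\max\Lambda})$ and the fresh $(U_m)_{S_\Lambda<m\le\max\Lambda}$ have the same joint law as the corresponding objects in the true process, and $F_{S_\Lambda,n}$ is a fixed measurable map, $\mathbf{X}_\Lambda$ produced by the algorithm is distributed as the weak limit of $\mathbf{X}_\Lambda^{r,\mathbf{w}}$, which is $\mu_\Lambda$ by Theorem \ref{t1}.

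The main obstacle I anticipate is the careful justification that $S_\Lambda$ is adapted to the filtration \eqref{filtra} and that conditioning on it does not distort the law of the subsequent $K_m$'s — i.e., a clean strong-Markov / optional-stopping argument for the random walks on $\Gamma_\theta$ — together with the bookkeeping that $\tilde X_{S_\Lambda}$ is being inserted at precisely the site through which every relevant walk passes; once those two facts are in place, the identification with the $r\to-\infty$ limit is essentially the content of Theorem \ref{t1}.
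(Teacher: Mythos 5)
Your argument is correct and follows the same overall architecture as the paper's proof: couple the algorithm's output with $\mathbf{X}^{r,\mathbf{w}}_{\Lambda}$, use coalescence of $\theta$ to get $S_{\Lambda}>-\infty$ a.s., observe that every walk passes through $S_{\Lambda}$ so the output is a deterministic functional of the value at that site, and conclude via the coupling inequality as $r\to-\infty$. The one place where you diverge is in identifying the law of $X_{S_{\Lambda}}^{r,\mathbf{w}}$: you keep $\mathbf{w}$ arbitrary and invoke Lemma \ref{lemmatec} in the limit $r\to-\infty$, applied at the \emph{random} site $S_{\Lambda}$. The paper instead chooses the boundary condition $\mathbf{w}$ with i.i.d.\ $\hat\lambda$-distributed components, so that by part 1 of Lemma \ref{ultimo} every single-site marginal of $\mathbf{X}^{r,\mathbf{w}}$ is \emph{exactly} $\hat\lambda$ for every finite $r$; combined with the strong Markov property (conditionally on $\{S_{\Lambda}=s\}\in\mathcal{F}^{\max\Lambda}_s$, the variable $X_s^{r,\mathbf{w}}$ is independent of $\mathcal{F}^{\max\Lambda}_s$ and $\hat\lambda$-distributed), the coupling with the algorithm is then exact on $\{S_{\Lambda}\ge r\}$ and the total variation error is simply $P(S_{\Lambda}<r)$. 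Your route works too, but it requires exactly the extra bookkeeping you flag at the end: you must condition on $\{S_{\Lambda}=s\}$, note that this event depends only on $(K_n, s<n\le\max\Lambda)$ so the variables at and below $s$ keep their unconditional law, apply Lemma \ref{lemmatec} at each fixed $s$ (convergence in law on a finite alphabet being convergence in total variation), and then sum over $s$ by dominated convergence. The paper's choice of stationary boundary condition sidesteps all of this and yields the cleaner quantitative bound; it is worth adopting. (Minor slip: the adaptedness of $S_{\Lambda}$ to \eqref{filtra} follows from the discussion preceding the theorem, not from Proposition \ref{period}.)
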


\medskip

$$\xymatrix{
\bullet & \bullet & \bullet \ar@/^1.5pc/[ll] & \bullet \ar@/_1.5pc/[lll]
& \bullet & \bullet & \bullet \ar@/^1.5pc/[llll]
& \bullet \ar@/_1.5pc/[llll] & \bullet \ar@/^1.5pc/[ll] & \bullet
\ar@/_1.5pc/[ll]
} $$

\smallskip

\begin{center}\label{fig3}
\small{Fig. 3. An example of coalescing random walks, with
$\mathcal {A}=\{2k, k \in \mathbb {N}_+\}\cup\{3\}$, for the simulation of the two rightmost
adjacent sites. The leftmost site is drawn from the distribution $\hat {\lambda}$.}
\end{center}

\medskip

\begin{proof} By assumption we are working in the uniqueness regime, so we can
approximate $\mu_{\Lambda}$ in total variation norm with the law of
$\mathbf {X}_{\Lambda}^{r,\mathbf{w}}$, with arbitrarily chosen
boundary condition $\mathbf{w}$, as $r \to -\infty$. Choosing the
components of $\mathbf{w}$ to be i.i.d. from the law $\hat
{\lambda}$ we can make profit of Lemma \ref{ultimo}. By the
strong Markov property applied to the random walks $\mathcal
{T}^{(n)}, n \in \Lambda$, conditionally to $ \{S_{\Lambda}=s \} $
with $s \geq r$, $X_s^{r,\mathbf{w}}$ is independent of $\mathcal
{F}_s^{\max \Lambda}$ and has the distribution of $\hat{\lambda}$.
By consequence we can represent the random variable $\tilde {X}$
produced in Step 2 of the algorithm as $X_s^{r,\mathbf{w}}$, which
implies that on the event
 $ \{S_{\Lambda}=s \}
$ with $s \geq r$,
$$
X_n^{r,\mathbf{w}}=F_{s,n}( K_m , U_m , s < m \leq n;X_s^{r,\mathbf{w}})=X_n, n \in \Lambda.
$$
By the coupling inequality (see e.g. \cite{Lind}) we have that the
total variation distance between the law of $\mathbf
{X}_{\Lambda}^{r,\mathbf{w}}$ and that of the output $\mathbf
{X}_{\Lambda}$ of the algorithm is bounded by the probability that
$S_{\Lambda}<r$. By sending $r$ to $-\infty$  the proof is
completed.
\end{proof}

Notice that if the unique invariant distribution $\hat {\lambda}$ for the
stochastic matrix $\hat P$ is difficult to compute, one can use a perfect
simulation algorithm for finite Markov chains to obtain a sample from it.

In the non-coalescent case the simulation algorithm of the previous theorem is unfeasible
since the coalescence point $S_{\Lambda}$ of $\Lambda$ is not finite with probability $1$.
Even if the distribution $\theta$ is coalescent we may need to stop the simulation
when it goes beyond some large negative threshold because of memory and time limitations.
A fortiori this needs to be done
if we are not able to prove coalescence. In all these cases it is still
possible to produce a sampling algorithm, provided a certain small error is accepted.
In order to evaluate this error
we need to introduce the following random time
\begin{equation}\label{ciao}    \hat S_{\Lambda}= \inf \{ S_{n,m}: n,m \in \Lambda, S_{n,m}>-\infty \}.
\end{equation}
In case the set appearing at the r.h.s. of \eqref{ciao} is empty we
define $\hat S_{\Lambda}=\min \Lambda$. As a consequence $\hat
S_{\Lambda}$ is finite, but when $\theta$ is not coalescent, it is
not adapted to the filtration \eqref{filtra}-

Here is a $\varepsilon$-approximate simulation algorithm, where
$\varepsilon$ is an error which goes to zero as the \
\emph{threshold site} $u \in \mathbb {Z}$ appearing in the algorithm
decreases to $-\infty$. Indeed, $\hat {S}_{\Lambda}$ being finite,
in principle it is possible to select $u$ sufficiently close to
$-\infty$ to make the r.h.s. of the forthcoming \eqref{stima}
smaller than any fixed $\varepsilon >0$. In practice, the
determination of a tail estimate on the distribution function of
$\hat {S}_{\Lambda}$ can be extremely complicate.

\begin{theorem}\label{epsilone}
Consider a transition kernel $p$ of the form \eqref{kern} with
$d(\mathcal{A}) =1$ and let $P_{(\cdot)}$ irreducible and aperiodic.
 Let $\hat \lambda$ be the
unique invariant distribution of the stochastic matrix $\hat {P}$
and $\mu$ be the unique element of $\mathcal {G}(p)$. Let
$\Lambda \subset \mathbb {Z}$ be finite, and let $u < \min \Lambda$.
Let $\tilde {\mu}^u_{\Lambda}$ be the law of the random vector
$\mathbf {\tilde X}^u_{\Lambda}=(\tilde X^u_n, n \in \Lambda)$
defined by the following

\medskip
{\bf Simulation Algorithm 2}

\begin{itemize}
    \item[1.] Construct the random walk $\mathcal{T}^{(n)}$ until $V^u_{(n)}$
    is reached, for $n \in \Lambda$;
    \item[2.] For any $m \in \mathcal{V}^{u,\Lambda}= \{V^u_{(n)} : n \in \Lambda\}$
    sample $\tilde {X}^u_{m} \sim \hat \lambda$, independently;
    \item[3.] Keeping the $K_m$ used in the  step 1
    and sampling $U_m$ i.i.d. uniform
    in $(0,1)$ for $u<m\leq \max {\Lambda}$, compute
    \begin{equation}\label{itera}
    {\tilde X}_n^{u}=F_{u,n}(K_m , U_m , u
    < m \leq n ;\tilde {X}^u_{V_u^{(n)}}),\,\,\, n \in \Lambda.
    \end{equation}
\end{itemize}

Then, in total variation norm
\begin{equation}\label{stima}
||\tilde {\mu}^u_{\Lambda}-{\mu}_{\Lambda}||\leq P(\hat
{S}_{\Lambda} <u) .
\end{equation}
\end{theorem}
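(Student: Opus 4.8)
The plan is to bound $\|\tilde\mu^u_\Lambda-\mu_\Lambda\|$ by the failure probability of a single coupling. I would put everything on one probability space carrying i.i.d.\ $\mathbf K=(K_m)_{m\in\mathbb Z}\sim\theta$ and i.i.d.\ $\mathbf U=(U_m)_{m\in\mathbb Z}$ uniform on $[0,1]$, independent of $\mathbf K$. Under the hypotheses of Theorem~\ref{t1}, the coupling argument used to prove it gives, for a.e.\ realization of $(\mathbf K,\mathbf U)$, a configuration $\mathbf X$ (the common limit of $\mathbf X^{r,\mathbf w}$ as $r\to-\infty$, irrespective of $\mathbf w$) of law $\mu$; passing to the limit in the backward construction, $X_n=F_{u,n}(K_m,U_m,u<m\le n;X_{V_u^{(n)}})$ for every $n$. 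I would run Algorithm 2 using the \emph{same} $(K_m)_{u<m\le\max\Lambda}$ to build the walks $\mathcal T^{(n)},\,n\in\Lambda$, down to level $u$, and the \emph{same} $(U_m)_{u<m\le\max\Lambda}$ in step~3, so that $\tilde X^u_n$ and $X_n$ are produced by the same map $F_{u,n}$ from the same data, applied respectively to $\tilde X^u_{V_u^{(n)}}$ and $X_{V_u^{(n)}}$. Hence it suffices to couple the i.i.d.\ $\hat\lambda$ vector $(\tilde X^u_m)_{m\in\mathcal V^{u,\Lambda}}$ produced in step~2 with $(X_m)_{m\in\mathcal V^{u,\Lambda}}$ so that the two agree on $\{\hat S_\Lambda\ge u\}$.

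Next I would record the structure of $\{\hat S_\Lambda\ge u\}$: there the level sets of $n\mapsto V_u^{(n)}$ are exactly the coalescence classes of $\Lambda$, so $\mathcal V^{u,\Lambda}=\{v_1,\dots,v_k\}$ has one point per class, and two walks started from distinct $v_i,v_j$ never meet — below $u$ as well, since a meeting there would produce a finite coalescence point $<u$. Conditioning on $\mathcal F^{+}:=\sigma\big((K_m)_{m>u},(U_m)_{u<m\le\max\Lambda}\big)$ fixes $k$, the sites $v_1,\dots,v_k$ and the forward maps; it reduces $\{\hat S_\Lambda\ge u\}$ to the $\sigma\big((K_m)_{m\le u}\big)$-event $\mathcal E$ that the walks issued from $v_1,\dots,v_k$ (which live at sites $\le u$) are pairwise disjoint; and $\mathbf X|_{(-\infty,u]}$ is independent of $\mathcal F^+$. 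On $\mathcal E$ each $X_{v_i}$ depends only on the walk $\mathcal T^{(v_i)}$ and the $U$'s along it, so conditionally on $\mathbf K$ the $X_{v_i}$ are independent with $X_{v_i}\sim\nu(\cdot\mid\mathcal T^{(v_i)})$, where $\nu(\cdot\mid w)$ is the a.s.\ limiting law of the backward iteration along the walk $w$ (existence as in Lemma~\ref{lemmatec}), and $E[\nu(\cdot\mid w)]=\hat\lambda$ when $w$ is a $\theta$-walk (again Lemma~\ref{lemmatec}).

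The heart of the argument is the domination inequality, valid for every $(g_1,\dots,g_k)\in G^k$,
\[
P\big(X_{v_1}=g_1,\dots,X_{v_k}=g_k,\ \mathcal E\ \big|\ \mathcal F^+\big)\ \le\ \prod_{i=1}^k\hat\lambda(g_i),
\]
which, $G^k$ being finite, is exactly what is needed to build, conditionally on $\mathcal F^+$, a coupling of an i.i.d.\ $\hat\lambda$ vector $(\tilde X^u_{v_i})_i$ with $(X_{v_i})_i$ that coincides on $\mathcal E$: set $\tilde X^u_{v_i}:=X_{v_i}$ on $\mathcal E$ and, off $\mathcal E$, draw independently from the nonnegative defect measure $\hat\lambda^{\otimes k}-P\big((X_{v_\bullet})\in\cdot,\ \mathcal E\mid\mathcal F^+\big)$ suitably normalized. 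I would prove the inequality by revealing the walks $\mathcal T^{(v_1)},\dots,\mathcal T^{(v_k)}$ one at a time: having revealed the first $j-1$, the event that $\mathcal T^{(v_j)}$ misses all of them coincides with the event that a walk started at $v_j$ with fresh i.i.d.\ $\theta$ decrements avoids the already-revealed sites, and on that event $X_{v_j}$ is driven entirely by those fresh decrements; hence
\[
E\big[\mathbf 1_{\{\mathcal T^{(v_j)}\text{ misses the first }j-1\}}\,\nu(g_j\mid\mathcal T^{(v_j)})\ \big|\ \mathcal T^{(v_1)},\dots,\mathcal T^{(v_{j-1})},\mathcal F^+\big]\ \le\ E\big[\nu(g_j\mid w)\big]=\hat\lambda(g_j),
\]
and multiplying these bounds down from $j=k$ to $j=1$ (using conditional independence of the $X_{v_i}$ on $\mathcal E$ at each step) gives the claim.

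Integrating the conditional coupling over $\mathcal F^+$ and then propagating forward with the shared $(K_m,U_m)_{u<m\le\max\Lambda}$ yields $\tilde X^u_\Lambda=X_\Lambda$ on $\{\hat S_\Lambda\ge u\}$; moreover the joint law of $\big((U_m)_{u<m\le\max\Lambda},(\tilde X^u_{v_i})_i\big)$ is unchanged by the coupling, so $\tilde X^u_\Lambda$ has law $\tilde\mu^u_\Lambda$, and the coupling inequality gives \eqref{stima}. I expect the sequential-revelation step to be the delicate point: one must be careful to identify $\{\hat S_\Lambda\ge u\}$ with the mutual-avoidance event $\mathcal E$ after conditioning on $\mathcal F^+$, and to make precise the Markovian bookkeeping that, on $\mathcal E$, each $X_{v_i}$ is a function of fresh, mutually independent decrements — the inequality $E[\mathbf 1_{\{\text{miss}\}}\nu(g_j\mid\cdot)\mid\cdots]\le E[\nu(g_j\mid w)]$ being the place where the nonnegativity of $\nu$ and the ``avoidance $=$ freshness'' identity do the work.
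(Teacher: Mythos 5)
Your proof is correct, but the way you produce the i.i.d.\ $\hat\lambda$ vector coupled to $(X_m)_{m\in\mathcal V^{u,\Lambda}}$ is genuinely different from the paper's. The paper proves no domination inequality: it constructs the step-2 sample explicitly, by running from each $m\in\mathcal V^{u,\Lambda}$ an \emph{independent} random walk below $u$ that reuses the original decrements $K_l$ and uniforms $U_l$ except at sites already occupied by another of these walks (and at shared landing sites below $r$), where fresh independent copies are substituted; by Lemma \ref{ultimo} the resulting values are exactly independent and $\hat\lambda$-distributed, and on $\{\hat S_\Lambda\ge u\}$ no substitution ever occurs, so they coincide with $X^{r,\mathbf w}_{\mathcal V^{u,\Lambda}}$; the bound then follows from the coupling inequality after sending $r\to-\infty$. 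Your route replaces this explicit resampling by the abstract ``maximal coupling on an event'' device, fed by the inequality $P(X_{v_1}=g_1,\dots,X_{v_k}=g_k,\mathcal E\mid\mathcal F^+)\le\prod_i\hat\lambda(g_i)$, which you prove by sequential revelation of the walks. The two arguments rest on the same probabilistic fact --- on the disjointness event each walk below $u$ is driven by decrements and uniforms untouched by the others --- but the paper turns it directly into a construction (which is also what one would implement), while you extract only the inequality and then invoke a general coupling lemma; you also dispense with the boundary condition $\mathbf w$ and with the limit $r\to-\infty$ inside the coupling by working with the a.s.\ limit configuration $\mathbf X$ supplied by the proof of Theorem \ref{t1}, which is a legitimate simplification. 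The sequential-revelation step you flag as delicate is sound (condition on all trajectories, note that on $\mathcal E$ the $X_{v_i}$ are conditionally independent with laws $\nu(\cdot\mid\mathcal T^{(v_i)})$, and bound the $j$-th factor by substituting fresh copies for the already-revealed $K$'s), but it is precisely the bookkeeping that the paper's explicit substitution makes automatic.
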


\begin{proof}
Choosing the same boundary condition $\mathbf {w}$ as in the
previous theorem we can decompose the formula \eqref{F} in the
following two steps: first the boundary conditions are propagated up
to $\mathcal {V}^{u, \Lambda}$
\begin{equation}\label{coupdef1}
X_m^{r, \mathbf{w}}=F_{r,m}(K_l , U_l , r < l \leq m ;
w_{V^r_{(m)}}),\,\,\, m \in \mathcal {V}^{u, \Lambda} ,
\end{equation}
for any $u< \min{\Lambda}$.  Next  the random variables at $\Lambda
$ are constructed
\begin{equation}\label{coupdef3}
X_n^{r, \mathbf{w}}=F_{u,n}(K_l , U_l , u< l \leq n ; X^{r,
\mathbf{w}}_{V^u_{(n)}}),\,\,\, n \in \Lambda .
\end{equation}

Now we construct on the same probability space the random vector
$\tilde {\mathbf{X}}^u_{\mathcal {V}^{u, \Lambda}}$ produced by step 2 of
the algorithm. It will be shown that, when the
event $\{\hat{S}_{\Lambda} \geq u\}$ occurs,
 $\tilde {\mathbf{X}}^u_{\mathcal {V}^{u, \Lambda}}$
will coincide with $\mathbf{X}^{r, \mathbf{w}}_{\mathcal {V}^{u,
\Lambda}} $, as given by \eqref{coupdef1}. Therefore, on this event
also $\mathbf{X}_{\Lambda}^{r, \mathbf{w}}$, given by
\eqref{coupdef3}, will
   coincide with the output of the algorithm
   $\tilde {\mathbf {X}}_{\Lambda}^{u}$
    given by \eqref{itera}. By applying again the coupling inequality
    and sending
     $r \to -\infty$  the proof will be concluded.

For the last step we define
i.i.d. sequences $(K^{(m)}_l,U^{(m)}_l),r< l\leq u$, , independently
for  any $m \in \mathcal{V}^{u,\Lambda} $.
From each site $m$, by means of the $K^{(m)}_l$'s, independent
random walks are
started. As done in the case of two walks, we assume
that the rightmost of them is updated first.
The same $(K_l,U_l)$'s defined to
construct $\mathbf {X}^{r, \mathbf {w}}$ are used, except at coalescence sites,
where  additional independent copies are sampled.
Analogously to \eqref{indiceM}, let us define ${\tilde V}_r^{(m)}$ to be
the site where the random walk starting from $m$  lands under the  threshold $r$.
At this site we use the original
boundary condition appearing in $\mathbf {X}^{r, \mathbf {w}}$.
If more  random walks land at the same site, additional
independent copies are sampled as before.

These boundary conditions are propagated forward in time on each walk,  using the $U^{(m)}_l$'s,
until the starting points $m \in \mathcal{V}^{u,\Lambda}$
are reached. Following the notation of \eqref{F}, we have
\begin{equation}\label{esagera}
 {X}_{m}^{r,\mathbf{w}^{(m)}}=F_{r,m}(K^{(m)}_l,U^{(m)}_l,
r<l\leq m;w^{(m)}_{V_r^{(m)}}), \,\,\, m \in \mathcal{V}^{u,
\Lambda}
\end{equation}
By Lemma \ref{ultimo},
$ {X}_{m}^{r,\mathbf{w}^{(m)}} \sim \hat {\lambda}$, and they
are independent by construction, so they are identical in law
to the $ {\tilde X}_m^{u}$'s generated in
step 2 of the algorithm as promised.

Finally, observe that when $\hat {S}_{\Lambda}\geq u$, no site is
visited twice by any of these random walks and therefore, for any $m
\in \mathcal{V}^{u,\Lambda}$, the random variables $X_m^{r,
\mathbf{w}} $ defined by \eqref{coupdef1} coincide with $
{X}_{m}^{r,\mathbf{w}^{(m)}}$ in \eqref{esagera}. Using the coupling
inequality and sending $r$ to $-\infty$ concludes the proof.
\end{proof}

As remarked by one of the referees, the previous two algorithms work
also in case of non-uniqueness to construct any stationary element
of $\mathcal {G}(p)$. Which element is picked up depends on the
invariant distribution of $\hat P$ used in Step 2. In the
essentially irreducible case the matrix $\hat P$ is itself
essentially irreducible, so its unique invariant distribution is the
only possible choice.

\begin{corollary} Consider a transition kernel $p$ of the form \eqref{kern} with
$d(\mathcal{A}) =1$ and let $P_{(\cdot)}$ irreducible but \emph{not
aperiodic}. Let $\hat \lambda$ be the unique invariant distribution
of the stochastic matrix $\hat {P}$. Then $\mathcal {G}(p)$ has a
unique stationary element $\mu_s$. Let $\Lambda \subset \mathbb{Z}$
be finite. Simulation Algorithm 1 (for coalescent distributions
$\theta$) constructs a random vector $\tilde {\mathbf
{X}}_{\Lambda}$ distributed as $\mu_{s,\Lambda}$. Simulation
Algorithm 2 constructs a random vector $\tilde {\mathbf
{X}}^u_{\Lambda}$ converging to $\mu_{s,\Lambda}$ as $u \to
-\infty$.
\end{corollary}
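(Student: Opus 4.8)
The plan is to reduce the non-aperiodic case to the aperiodic one by passing to the periodic sub-classes identified in Proposition~\ref{period}. First I would invoke Proposition~\ref{period} to obtain the partition $\{G_0,\dots,G_{\hat d-1}\}$ of $G$ with $\hat d>1$, so that any word $\mathbf a$ with $P_{\mathbf a}(i,j)>0$ moves $i\in G_h$ to $G_{h+s(\mathbf a)}$. The key structural observation is that on the ``time $\times$ phase'' lattice the dynamics decouples: the pair $(n \bmod \hat d,\; \text{phase class of } X_n)$ is deterministic along trajectories of the random walk, so the $\hat d$ residue classes of $\mathbb Z$ mod $\hat d$ carry independent copies of a process governed by the ``$\hat d$-step'' kernel, exactly as in the proof of Proposition~\ref{sottografo} but now with the phase bookkeeping. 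On each residue class, after rescaling, one gets an irreducible \emph{aperiodic} $G$-stochastic function (the period has been divided out), to which Theorem~\ref{t1} applies, giving a unique compatible law with single-site marginals $\hat\lambda_r$, the unique invariant distribution of the corresponding matrix. Uniqueness of the stationary element $\mu_s$ then follows: a stationary element of $\mathcal G(p)$ must have all single-site marginals in $\mathcal I$ by Lemma~\ref{ultimo}, part 3; since $P_{(\cdot)}$ is irreducible, $\mathcal I=\{\hat\lambda\}$ by the remark following Lemma~\ref{ultimo}; and stationarity forces the $\hat d$ marginal processes to be identically distributed, which together with the rigidity coming from Theorem~\ref{t1} on each residue class pins $\mu_s$ down uniquely. (Its construction: take any element of $\mathcal G(p)$ and average over the $\hat d$ shifts, as in Lemma~\ref{ultimo}, part~2.)

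For the algorithmic statements I would argue that Simulation Algorithms 1 and 2 are insensitive to aperiodicity in their mechanics --- they only use the random walks $\mathcal T^{(n)}$, the decrement law $\theta$ (with $d(\mathcal A)=1$ still assumed), the coalescence structure, and a draw from $\hat\lambda$ in Step~2. Replaying the proof of Theorem~\ref{caso1} (resp.\ Theorem~\ref{epsilone}): choose the boundary condition $\mathbf w$ with i.i.d.\ $\hat\lambda$-distributed coordinates, so that Lemma~\ref{ultimo}, part~1, guarantees $X^{r,\mathbf w}_s\sim\hat\lambda$ for every $s$; by the strong Markov property at the coalescence point $S_\Lambda$ (resp.\ at $\mathcal V^{u,\Lambda}$), the value fed forward coincides in law with a $\hat\lambda$ draw, and the coupling inequality bounds the total-variation error by $P(S_\Lambda<r)\to0$ (resp.\ by $P(\hat S_\Lambda<u)$). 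The only extra point to verify is that the common limit in law of $\mathbf X^{r,\mathbf w}_\Lambda$, as $r\to-\infty$, is the marginal of the \emph{stationary} $\mu_s$ rather than of some non-stationary element: this holds because the i.i.d.\ $\hat\lambda$ boundary condition is itself shift-invariant in law, so every weak limit point of the laws of $\mathbf X^{r,\mathbf w}$ is shift-invariant, hence stationary, hence equal to $\mu_s$ by the uniqueness just proved.

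I expect the main obstacle to be the first part: making precise that ``the $\hat d$ residue classes carry independent rescaled processes'' in a way that legitimately invokes Theorem~\ref{t1}. Unlike Proposition~\ref{sottografo}, where $d(\mathcal A)>1$ let the decrements themselves respect the sublattice, here $d(\mathcal A)=1$ and it is only the \emph{phase labels} $G_h$ that are periodic; a single random walk step of size $k\in\mathcal A$ can land in any residue class mod $\hat d$, so the decoupling is along the phase coordinate, not the spatial one, and one must check that conditioning on the $\mathbf K$-sample still makes the relevant marginals independent and identically governed by an irreducible aperiodic kernel on $G$ (essentially the matrix $Q$-type construction of Lemma~\ref{lemmatec}, but organized by phase). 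Once this bookkeeping is set up cleanly, uniqueness of $\mu_s$ and the two algorithmic claims are routine consequences of Lemma~\ref{ultimo}, Theorem~\ref{t1}, and the proofs of Theorems~\ref{caso1} and~\ref{epsilone}.
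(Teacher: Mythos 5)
There is a genuine gap in your first part, and it is precisely the obstacle you flag yourself: when $d(\mathcal A)=1$ but $\hat d>1$, there is no decomposition of the process into independent rescaled copies indexed by the residue classes of $\mathbb Z$ modulo $\hat d$. The periodicity of Proposition~\ref{period} lives in the \emph{state space} ($G=G_0\cup\dots\cup G_{\hat d-1}$), not in $\mathbb Z$: a decrement $k\in\mathcal A$ with $k\not\equiv 0 \pmod{\hat d}$ connects sites in different residue classes, the random walks $\mathcal T^{(m)}$ and $\mathcal T^{(n)}$ with $m\not\equiv n\pmod{\hat d}$ can perfectly well coalesce, and consequently the marginal processes on the residue classes are neither independent nor governed by any ``$\hat d$-step'' kernel. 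What the phase constraint actually gives is that along any connected trajectory $X_n\in G_{(n+c)\bmod \hat d}$ for a single global offset $c$, and the non-uniqueness of Theorem~\ref{t2} is exactly the freedom in $c$; nothing decouples spatially. So Theorem~\ref{t1} cannot be invoked ``on each residue class,'' and your fallback argument (marginals in $\mathcal I=\{\hat\lambda\}$ plus ``rigidity on each residue class'') does not pin down the joint law either, since equality of single-site marginals says nothing about the full stationary measure. This step cannot be repaired along the lines you propose.

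The paper's route avoids this entirely and gets uniqueness of the stationary element as a \emph{byproduct of the algorithms}: take an arbitrary stationary $\tilde\mu\in\mathcal G(p)$; by Lemma~\ref{ultimo} and irreducibility of $\hat P$ its single-site marginals are $\hat\lambda$; use $\tilde\mu$ itself as the (random) boundary condition, so that $\mathbf X^{r,\mathbf w}\sim\tilde\mu$ \emph{exactly} for every $r$. The coupling arguments in the proofs of Theorems~\ref{caso1} and~\ref{epsilone} never used aperiodicity, only that the value read at the landing/coalescence site is $\hat\lambda$-distributed and independent of the walk structure; hence $\tilde\mu_\Lambda$ is within $P(S_\Lambda<r)$ (resp.\ $P(\hat S_\Lambda<u)$) in total variation of the algorithm's output, which does not depend on $\tilde\mu$. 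Letting $r\to-\infty$ identifies $\tilde\mu_\Lambda$ with (or makes it arbitrarily close to) a fixed law, simultaneously proving that all stationary elements coincide and that the algorithms sample from $\mu_{s,\Lambda}$. Your second part, with the i.i.d.\ $\hat\lambda$ boundary, is mechanically close to this but presupposes the uniqueness of $\mu_s$ that was to be proved (and the claim that every weak limit point is shift-invariant needs full convergence, not just subsequential limits); replacing the i.i.d.\ boundary by an arbitrary stationary compatible $\tilde\mu$ is the missing idea.
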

\begin{proof}
Consider any stationary element $\tilde \mu$ of $\mathcal {G}(p)$.
By Lemma \ref{ultimo} it has necessarily $\hat \lambda$-distributed
single-site marginals. Take the boundary condition $\mathbf {w}$
appearing in the proof of Theorem \ref{caso1} and \ref{epsilone} to
be $\tilde \mu$-distributed. Then $\mathbf {X}^{r, \mathbf {w}}$ has
the distribution ${\tilde \mu}_{\Lambda}$. Since this is either the
law or it is arbitrarily close in total variation to the random
vectors constructed by these algorithms, which do not depend on the
choice of $\tilde \mu$, one establishes both the statements of this
corollary.
\end{proof}
\comment{
\begin{rem}
By comparing the proofs of Theorem \ref{epsilone} and Theorem
\ref{t1} one observes that the equivalence relation $\equiv$ in
$\Lambda$ appearing in the latter, which requires the knowledge of
an infinite sequence of decrements, is approximated in the former by
$\equiv_u$ where $m \equiv_u n$ if and only if
$V_u^{(m)}=V_u^{(n)}$. Clearly  the relation $m \equiv_u n $ implies
$m \equiv n$.
\end{rem}
}

\section{A result with $G$ countable}\label{basta}

In this section $G$ is allowed to be countable. In this case the complete characterization
presented in Theorem \ref{completo} fails, despite the fact that the
conditions of essential irreducibility and aperiodicity continue to
make sense. Indeed, by the lack of compactness of the sets of
probability measures over $G^{\mathbb {Z}}$, existence is not
guaranteed. On one side this prevents the construction of more than
one compatible law when essential irreducibility and/or aperiodicity
do not hold, and on the other requires to strengthen these
assumptions to prove existence and uniqueness. In this section we
are going to provide an assumption of Doeblin type that, in case $G$
is countable, allows to prove existence and uniqueness of compatible
laws. For definiteness assume that either $G=\mathbb {N}_+$ or
$G=\{1,\ldots,|G|\}$.

\medskip
{\bf Hypothesis D.} For a kernel $p$ of the form \eqref{kern}, there exists a certain
state, say $1$ without loss of generality, and an integer $\bar n_0 \in \mathbb{N}_+$,
with the following property
\begin{equation}\label{infinito}
 \forall i \in G, \,\,\, \exists \mathbf {b}_i \in
 \mathcal{A}^* \text{ with } s(\mathbf {b}_i)=
 \bar n_0 \text{ such that } \inf_{i \in G} P_{\mathbf {b}_i} (i, 1)=:\varepsilon>0.
\end{equation}

Whereas in the countable case this assumption is strictly stronger
than essential irreducibility and aperiodicity, if $G$ is finite it
is actually equivalent for the following reason. First, the fact
that for any $i \in G$ one has $P_{\mathbf {b}_i} (i, 1)>0$ for some
$\mathbf {b}_i \in \mathcal {A}^*$ it implies that two different
irreducible classes cannot exist. Second, the fact that these words
can be chosen with the same depth denies the existence of a
non-trivial partition in periodic classes as in Proposition
\ref{period}.

Before stating the result,  define $\bar {\mathcal{B}}=\{\mathbf {b}_i, i \in G\}$,
$\bar {\rho}=\sum_{\mathbf {b} \in \bar {\mathcal{B}}}
\theta_{\mathbf {b}}>0$ and
\begin{equation*}
\bar Q=\frac {1}{\bar \rho} \sum_{\mathbf b\in  \bar{ \mathcal{B}}}
\theta_{\mathbf {b}}P_{\mathbf {b}}.
\end{equation*}
These quantities will play in the forthcoming result the same role as
played by $\mathcal {B}$, $\rho$ and $Q$ in the proof of Lemma \ref{lemmatec}.
Under Hypothesis D, this matrix has all the entries of the first column not smaller
than $\varepsilon$, so the Skorohod construction  gives a coupling
function $f_*:G \times [0,1] \rightarrow G$ that, in addition to the property that $f_*(g,U)$ has the law
$Q(g,\cdot)$ when $U$ is uniform in $(0,1)$, for any $g \in G$, satisfies
\begin{equation}\label{coupfun2}
f_*(g,u)=1, g \in G, 0<u<\varepsilon.
\end{equation}

We warn the reader that this coupling function enters explicitly in
Simulation Algorithm 3 presented below, differently from what
happened in the algorithms of the previous section. If the
cardinality of $G$ is large or infinite the computation of $\bar Q
$, and consequently of the coupling function $f^*$, can give rise to
accuracy and computational time problems. This is the reason for
which, even if the following theorem applies to $G$ finite as well,
and as such it provides a perfect simulation algorithm under the
uniqueness regime which is free of error, in practice it may be
preferable to accept a small error introducing a truncation
threshold in the simpler Simulation Algorithm 2, which, in addition
to the $(K_l,U_l)$'s, requires only the computation of the invariant
distribution $\hat \lambda$. Moreover Simulation Algorithm 3 does
not apply in a situation of non uniqueness, as in the non aperiodic
case.

\begin{theorem}\label{end}
Under Hypothesis D, there exists a unique element in
$\mathcal{G}(p)$, whose restriction to any finite $\Lambda \subset
\mathbb{Z}$ is the law of the random vector $\mathbf {X}_{\Lambda}$
given by Simulation Algorithm 3.

\medskip
{\bf Simulation Algorithm 3}
\begin{itemize}
    \item[1.] Set $\mathcal {V}=\Lambda$;
    \item[2.] If $\mathcal {V}=\emptyset$ stop.
    \item[3.] Otherwise set $m=\max \mathcal {V}$;
    \item[4.] Construct the random walk $\mathcal {T}^{(m)}$ and check if a
    subpath corresponding to a word in $\bar{ \mathcal{B}}$ appears
before it lands on or below a site $s$  in $\mathcal {V}$
(this is possible only if the distance between $m$ and $\mathcal V \setminus \{m\}$ exceeds $\bar n_0$);
    \item[5.] If such a subpath appears connecting, say,
    the sites $u$ and $l$ (with $u-l=n_0$), extract  $U^*$ uniform in $(0,1)$,
    independent of all the variables generated previously: if  $U^*<\varepsilon$, set $X_u=1$,
    compute by forward simulation $X_m$, using \eqref{back} and the coupling
    functions $f^*(U^*;\cdot)$ on the previously located segments $(l,u]$,
    together with $X_n, n \in \Lambda$ for all $n$ such that $\mathcal {T}^{(n)}$
    hits $m$, delete $m$ from $\mathcal {V}$ and go to 2.;
    \item[6.] Otherwise replace $m$ by $s$ in $\mathcal V$ (if $s$ is already
    in $\mathcal {V}$ this means that the two random walks have coalesced) and go to 2.
\end{itemize}
\end{theorem}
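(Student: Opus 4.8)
The plan is to mimic, in the countable setting, the coupling argument of Lemma~\ref{lemmatec} and Theorem~\ref{t1}, replacing the set of connecting words $\mathcal{B}$ by the set $\bar{\mathcal{B}}$ furnished by Hypothesis~D, and to show that the output of Simulation Algorithm~3 is a Cauchy sequence (in total variation, on each finite window) of the finite-volume laws $\mathbf{X}^{r,\mathbf{w}}_\Lambda$ as $r \to -\infty$. First I would set up, for a fixed finite $\Lambda$, the forward construction of $\mathbf{X}^{r,\mathbf{w}}$ via the random walks $\mathcal{T}^{(n)}$, $n \in \Lambda$, driven by i.i.d.\ $\theta$-distributed decrements $(K_m)$ and i.i.d.\ uniform $(U_m)$; this is well-defined for any boundary condition $\mathbf{w} \in G^{\mathbb{Z}}$ since $p$ is continuous. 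The key point is that, by Hypothesis~D, whenever a subpath corresponding to a word $\mathbf{b}_i \in \bar{\mathcal{B}}$ of depth $\bar n_0$ appears between two sites $l < u$ of some $\mathcal{T}^{(n)}$, the transition from $X^{r,\mathbf{w}}_l$ to $X^{r,\mathbf{w}}_u$ can be realized through the coupling function $f_*$ satisfying \eqref{coupfun2}; on the event $\{U^* < \varepsilon\}$ this forces $X^{r,\mathbf{w}}_u = 1$ regardless of $X^{r,\mathbf{w}}_l$, hence regardless of $\mathbf{w}$ and of everything to the left of $u$.

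Next I would argue that, for each $n \in \Lambda$, such a ``successful'' subpath occurs almost surely along $\mathcal{T}^{(n)}$ before the walk leaves a bounded neighbourhood of the sites currently in $\mathcal{V}$: the decrements are i.i.d., so between two consecutive visits to $\mathcal{V}$ (or until coalescence with another walk) there are infinitely many independent ``trials'' for a $\bar{\mathcal{B}}$-subpath to appear, each succeeding with a fixed positive probability $\bar\rho$, and each such appearance is followed by an independent $U^*$ with $P(U^* < \varepsilon) = \varepsilon > 0$. This is exactly the bookkeeping in Steps~4--6 of the algorithm: a walk either hits another site of $\mathcal{V}$ (coalescence, in which case the two sites are merged and the common value is decided jointly) or it produces a successful segment that pins its value to $1$; in either case $\mathcal{V}$ strictly decreases in cardinality, so the algorithm terminates a.s.\ after finitely many rounds. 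Once all values on $\mathcal{V}$ have been fixed — either to $1$ on a successful segment or inherited through a coalescence — the forward recursion \eqref{back} propagates them deterministically (given the $K$'s and $U$'s) to all of $\Lambda$.

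To complete the proof I would then run the coupling inequality, as in Theorems~\ref{caso1} and~\ref{epsilone}: for any $r$ sufficiently far to the left, on the (high-probability) event that every walk $\mathcal{T}^{(n)}$, $n \in \Lambda$, has already had a successful $\bar{\mathcal{B}}$-segment strictly above the threshold $r$, the finite-volume vector $\mathbf{X}^{r,\mathbf{w}}_\Lambda$ coincides with the output $\mathbf{X}_\Lambda$ of Simulation Algorithm~3, irrespectively of $\mathbf{w}$; hence $\|\mathrm{Law}(\mathbf{X}^{r,\mathbf{w}}_\Lambda) - \mathrm{Law}(\mathbf{X}_\Lambda)\|$ is bounded by the probability of the complementary event, which tends to $0$ as $r \to -\infty$. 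This shows simultaneously that the finite-volume laws converge in total variation to a limit not depending on $\mathbf{w}$, giving uniqueness in $\mathcal{G}(p)$, and that this limit is precisely the law of $\mathbf{X}_\Lambda$; consistency across nested windows $\Lambda$ is automatic because the same driving variables are used, so by Kolmogorov extension there is a (unique) compatible law on $G^{\mathbb{Z}}$, establishing existence.

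The main obstacle I anticipate is the a.s.\ finiteness and correctness of the recursion in Steps~2--6 when $\theta$ is \emph{not} coalescent: there one must be careful that a walk which never coalesces with the others still, almost surely, eventually wanders far enough from $\mathcal{V}\setminus\{m\}$ to admit a $\bar{\mathcal{B}}$-segment of depth $\bar n_0$ without first re-entering $\mathcal{V}$, so that the ``trial'' structure is genuinely an infinite sequence of independent attempts each with success probability $\ge \bar\rho\,\varepsilon$. This is where the non-coalescent dichotomy for the von Schelling distance (coalesce, or distance $\to \infty$ a.s.) from \cite{Boudiba} is used, exactly as in the proof of Theorem~\ref{t1}; the remaining estimates are routine applications of the coupling inequality and of Lemma~\ref{ultimo} (the latter only if one wants to identify single-site marginals, which in the countable case need not be an invariant distribution of a finite matrix but of $\hat P$ restricted appropriately).
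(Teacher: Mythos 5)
Your proposal is correct and follows essentially the same route as the paper's proof: pinning values to the state $1$ via the coupling function $f_*$ on $\bar{\mathcal{B}}$-segments, establishing a.s.\ termination through the dichotomy (coalescence or divergence of the inter-walk distance) combined with independent trials of success probability $\bar\rho\,\varepsilon$, and concluding with the coupling inequality as $r\to-\infty$. You also correctly identified the one delicate point — that non-coalescing walks must a.s.\ be far enough apart infinitely often to admit a depth-$\bar n_0$ segment — which is exactly where the paper invokes the Boudiba dichotomy.
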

\begin{proof}
The main to prove is that the above algorithm stops in a random but a.s. finite time.
We already know that any two walks $\mathcal{T}^{(m)}$ and $\mathcal{T}^{(n)}$, with $m \neq n$,
either coalesce, or their distance go to infinity a.s. As a consequence it
happens with probability zero that the rightmost walk, constructed from site
$m$ at step 3, only finitely many times is at distance larger than $\bar n_0$ from
the current position of the other "surviving" walks, stored in the set
$\mathcal {V}$. Each time it is at a distance larger than $\bar n_0$ there is a probability
$\bar \rho \varepsilon$ to stop the walk with the determination of
the value $X_u$ at some site $u$, independently of the outcome of all
previous simulations. Hence a finite number of walks will all be stopped in a
finite time a.s. We call $W$ the leftmost site where a random walk is stopped.

Next consider any process of the form $\mathbf {X}^{r, \mathbf {w}}$
constructed through the recursion \eqref{F}. If this rule is modified by using on each
interval $(l,u]$ where a path in $\bar {\mathcal {B}}$ has occurred the coupling
function $f^*$, such process can be represented on the same probability space
where the algorithm is constructed, without changing its law.
Again by the coupling inequality the variation distance between $\mathbf {X}_{\Lambda}$ and
$\mathbf {X}^{r, \mathbf {w}}_{\Lambda}$ will be bounded by the probability that $\{W\leq r\}$.
The proof is completed by sending $r$ to $-\infty$.
\end{proof}


\end{document}